\newtheorem{theorem}{Theorem}
\newtheorem{lemma}[theorem]{Lemma}
\newtheorem*{claim*}{Claim}
\theoremstyle{definition}
\DeclarePairedDelimiter\floor{\lfloor}{\rfloor}
\DeclareMathOperator{\pw}{pw}
\newcommand{\lr}[1]{\left(#1\right)}
\newcommand{\etal}{\textit{et.~al.\@} }
\renewcommand{\leq}{\leqslant}
\renewcommand{\geq}{\geqslant}
\title{\textbf{Anagram-free Graph Colouring}}
\author{Tim E. Wilson\quad David R. Wood\footnote{Research supported by Australian Research Council}\\
\\
School of Mathematical Sciences\\
Monash University\\
Melbourne, Australia\\
\texttt{\{timothy.e.wilson, david.wood\}@monash.edu}}
\date{July 1, 2016}
\begin{document}
	
	\maketitle

\begin{abstract}
	An anagram is a word of the form $WP$ where $W$ is a non-empty word and $P$ is a permutation of $W$. We study anagram-free graph colouring and give bounds on the chromatic number. Alon et al. (2002) asked whether anagram-free chromatic number is bounded by a function of the maximum degree. We answer this question in the negative by constructing graphs with maximum degree 3 and unbounded anagram-free chromatic number. We also prove upper and lower bounds on the anagram-free chromatic number of trees in terms of their radius and pathwidth. Finally, we explore extensions to edge colouring and $k$-anagram-free colouring.

\end{abstract}

\section{Introduction}
	A \textit{square} is a word of the form $WW$ where $W$ is a non-empty word. A word $X$ is \textit{square-free} if no subword of $X$ is a square. Much of the study of squares involves avoiding squares or, equivalently, characterising the set of square-free words. An early result in this area is the construction, by  Thue \cite{thue1914probleme}, of arbitrarily long square-free words on three symbols. More recently, Alon \etal \cite{alon2002nonrepetitive} generalized the concept of square-free words to graph colouring by requiring colourings to avoid square sequences along subpaths of a graph. A \textit{vertex colouring} of a graph $G$ is a function $f:V(G) \to C$ where $C$ is a set of colours and $V(G)$ is the vertex set of $G$. A vertex colouring of a graph $G$ is \textit{square-free} if the sequence of colours on each subpath of $G$ is not a square. The \textit{square-chromatic number} of a graph $G$ is the minimum number of colours in a square-free colouring of $G$. Square-free graph colouring has been extensively studied and is often referred to as nonrepetitive colouring \cite{britten1971repetitive,grytczuk2007nonrepetitive, grytczuk2013new}. The square-free chromatic number of $G$ is denoted $\pi(G)$ and is also known as the \textit{Thue number} or \textit{nonrepetitive chromatic number}. With this notation, Thue's result is $\pi(P) \leq 3$ for every path $P$.
	
	In this paper we introduce anagram-free graph colouring, which is a topic suggested for study by Alon \etal \cite{alon2002nonrepetitive}. This definition follows naturally from the established notion of anagrams in combinatorics on words \cite{cori1990partially}. An \textit{anagram} is a word of the form $W_1W_2$ where $W_1$ is a non-empty word and $W_2$ is a permutation of $W_1$. A word is \textit{anagram-free} if it contains no anagram as a subword. An anagram is also called an \textit{abelian square} and an anagram-free word is also called an \textit{abelian square-free word} or a \textit{strongly non-repetitive sequence}. Our generalization to graph colouring follows the example set by square-free colouring. A vertex colouring of a graph $G$ is \textit{anagram-free} if the sequence of colours on each subpath of $G$ is not an anagram. The \textit{anagram-free chromatic number} of $G$ is the minimum number of colours in an anagram-free colouring of $G$, denoted by $\phi(G)$.
	
	Results analogous to Thue's exist for anagram-free words. Anagram-free words on three symbols have length at most $7$, for example $abcbabc$ \cite{cummings1996strongly}. Ker{\"a}n \cite{keranen1992abelian, keranen2009abelian} constructed anagram-free words of arbitrary length on four symbols. In the context of anagram-free graph colouring, Ker{\"a}nen's result is $\phi(P) \leq 4$ for every path $P$. Throughout this paper we prove bounds on $\phi$ that demonstrate significant differences in the behaviour of $\phi$ and $\pi$. This is somewhat surprising considering the similarity of Ker{\"a}nen's and Thue's results.
	
	\subsection{Bounding by maximum degree}
	
	An area of study central to square-free colouring is bounding $\pi(G)$ by functions of the maximum degree, $\Delta(G)$. Alon \etal \cite{alon2002nonrepetitive} proved a result that implies $\pi(G) \leq c\Delta(G)^2$ for some constant $c$. Several subsequent works improved the value of $c$ \cite{fertin2004star, grytczuk2006nonrepetitive, harant2012nonrepetitive} with the best known value being $c = 1 + o(1)$ \cite{dujmovic2011nonrepetitive}. Earlier proofs use the Lov\'{a}sz Local Lemma while the proof in \cite{dujmovic2011nonrepetitive} uses entropy compression.
	
	Alon \etal \cite{alon2002nonrepetitive} states as an open problem whether $\phi$ is bounded by a function of maximum degree. We answer this question, thus finding the first significant difference between $\pi$ and $\phi$. We prove that, unlike $\pi$, no function of maximum degree is an upper bound on $\phi$.
	\begin{theorem}\label{thm:noDegreeBound}
		Graphs of maximum degree $3$ have unbounded anagram-free chromatic number.
	\end{theorem}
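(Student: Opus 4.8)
The plan is to exhibit an explicit family of bounded-degree graphs witnessing the theorem, and the natural candidates are the complete binary trees: let $B_h$ be the rooted complete binary tree of height $h$, in which every internal vertex has one parent and two children, so $\Delta(B_h)=3$ (the root has degree $2$). It therefore suffices to show that for every number of colours $c$ there is an $h$ with $\phi(B_h)>c$; equivalently, that every $c$-colouring of a sufficiently tall binary tree contains an anagram on some path. Fix such a colouring $f\colon V(B_h)\to\{1,\dots,c\}$ and, for each vertex $v$, let $\mathbf{g}(v)\in\mathbb{Z}_{\geq 0}^{\,c}$ be the colour-count vector of the root-to-$v$ path, so that the colour multiset of a descending path between an ancestor $u$ and a descendant $w$ is $\mathbf{g}(w)-\mathbf{g}(u)$.

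The engine of the lower bound is a pigeonhole argument that plays the exponential branching of the tree against the polynomial number of possible colour multisets. A binary tree of height $h$ has $2^{h}$ leaves, each at depth $h$, so each leaf $\lambda$ satisfies $\|\mathbf{g}(\lambda)\|_{1}=h+1$, and the number of such vectors is at most $\binom{h+c}{c-1}$, which is polynomial in $h$ for fixed $c$. Choosing $h$ large forces many leaves to share a single count-vector. I would then take two leaves $\lambda_{1},\lambda_{2}$ with $\mathbf{g}(\lambda_{1})=\mathbf{g}(\lambda_{2})$ and let $w$ be their lowest common ancestor: the two descents from $w$ down to $\lambda_{1}$ and down to $\lambda_{2}$ diverge at $w$, have equal length, and—by the collision—have equal colour multisets, so the path from $\lambda_1$ up to $w$ and down to $\lambda_2$ carries two consecutive equal-length blocks with the same multiset, the skeleton of an anagram.

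The step I expect to be the main obstacle is a parity mismatch at the meeting vertex. That leaf-to-leaf path has an \emph{odd} number of vertices, with $w$ at its centre, whereas every anagram has \emph{even} length; thus the two equal-multiset descents do not by themselves form an anagram, and any even window straddling $w$ is off-centre by exactly the single vertex $w$, so its two halves differ by one colour, namely $f(w)$ against the colour of a dropped endpoint. Overcoming this is the crux: rather than use a bare collision I would strengthen it, applying a secondary pigeonhole among the super-polynomially many leaves that share a count-vector to align the central colour with an endpoint colour, or to produce a near-collision $\mathbf{g}(\lambda_{1})-\mathbf{g}(\lambda_{2})=e_{\alpha}-e_{\beta}$ (with $e_{\alpha}$ the $\alpha$th standard basis vector) that absorbs the required one-vertex correction. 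Alternatively one can neutralise the correction at the level of the construction—for instance by uniformly subdividing edges or hanging identical gadgets—so that the window closes up to a genuine even-length anagram. A cleaner route, if available, is to note that $\pw(B_{h})\to\infty$ and invoke a lower bound on $\phi$ for trees in terms of pathwidth, from which the theorem is immediate since $\Delta(B_h)=3$.
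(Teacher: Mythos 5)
Your setup (exponentially many leaves versus polynomially many colour multisets) is the right engine, and you have correctly isolated the crux: in a tree, the two equal-multiset descents from the lowest common ancestor $w$ are separated by $w$ itself, so the leaf-to-leaf path has odd order and cannot be an anagram, and any even window straddling $w$ is off-centre. But none of your three proposed repairs closes this gap. The ``cleaner route'' via pathwidth does not exist: the paper only proves the \emph{upper} bound $\phi(T)\leq 4\pw(T)+1$, and whether a bound of the form $\pw(T)\leq f(\phi(T))$ holds on trees is explicitly posed there as an open problem; moreover, whether $\phi$ is even unbounded on complete binary trees was itself open when the paper was written (it was settled only in the independent work of Kam{\v{c}}ev, {\L}uczak and Sudakov cited in the paper's final note). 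So any argument confined to $B_h$ attacks a strictly harder problem than the theorem requires. The secondary-pigeonhole/near-collision idea is left as a sketch with no mechanism for forcing the one-vertex correction to cancel against an adversarial colouring --- and if it worked by elementary counting, the binary-tree question would not have been open. Subdividing edges keeps the graph a tree and preserves exactly the same parity obstruction.

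The missing idea in the paper is to abandon trees altogether and modify the construction so the anagram can bypass the meeting vertex. The paper takes the rooted tree in which vertices at even depth have two children and non-leaf vertices at odd depth have one child (branching only at even depths, still $2^{(h+1)/2}$ leaves for odd height $h$), and then adds an edge between every pair of siblings. In the resulting graph $G$, if two root-to-leaf paths $P,Q$ have equal colour multisets and $R=P\cap Q$, then $P'=P-V(R)$ and $Q'=Q-V(R)$ satisfy $M(P')=M(Q')$, and --- this is the point --- the top vertices of $P'$ and $Q'$ are siblings, so $G\left[V(P')\cup V(Q')\right]$ is a path of even order whose two halves are exactly $P'$ and $Q'$: a genuine anagram, with the common ancestor excised. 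The alternating branching is what keeps $\Delta(G)=3$: a branching vertex has a parent and two children but no sibling edge, while an odd-depth vertex has a parent, one child, and one sibling edge. The theorem is thus proved for degree-$3$ \emph{graphs}, not degree-$3$ trees; your proposal, as it stands, cannot be completed without essentially new ideas.
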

	Theorem \ref{thm:noDegreeBound} complements the result of Richmond and Shallit \cite{richmond2009counting} regarding enumeration of anagrams. They counted anagrams with the aim of using the Lov\'{a}sz Local Lemma to prove that $\phi$ is bounded on paths. They concluded that the probability that a random word is an anagram is too large for this approach. A proof using the Lov\'{a}sz Local Lemma that $\phi$ is bounded on paths would likely apply to graphs of maximum degree $3$, which Theorem \ref{thm:noDegreeBound} shows is impossible.
	
	We also consider variations of square-free colouring and apply them to anagram-free colouring. An \textit{edge colouring} of a graph is an assignment of colours to the edges of the graph. An edge colouring of $G$ is \textit{square-free} if every subpath of $G$ has a square-free colour sequence along its edges. Alon \etal \cite{alon2002nonrepetitive} defined the \textit{square-chromatic index}, denoted $\pi'(G)$, as the minimum number of colours in a square-free edge colouring of $G$. We define anagram-free edge colouring similarly. An edge colouring of a graph $G$ is \textit{anagram-free} if every subpath of $G$ has an anagram-free colour sequence along its edges. The corresponding \textit{anagram-free chromatic index} is denoted $\phi'(G)$.
	
	The obvious bound, $\phi'(G) \geq \Delta(G)$, follows from the observation that edges incident to a common vertex receive distinct colours in an anagram-free edge colouring. We prove a significant improvement on this bound with the following result.
	\begin{theorem}\label{thm:noDegreeBoundEdge}
		Trees of maximum degree $3$ have unbounded anagram-free chromatic index.
	\end{theorem}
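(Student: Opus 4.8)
The plan is to exhibit an explicit family of max-degree-$3$ trees whose anagram-free chromatic index grows without bound, namely the perfect binary trees $B_h$ of height $h$ (root of degree $2$, internal non-root vertices of degree $3$, leaves of degree $1$). Fix a number of colours $k$ and suppose, for contradiction, that $B_h$ admits an anagram-free edge colouring with $k$ colours. For each vertex $u$ I would record the colour multiset $\mu(u)$ of the edges on the unique root-to-$u$ path; this is a multiset of size $\operatorname{depth}(u)$ over $k$ colours, that is, a Parikh vector.

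The engine of the proof is the following observation about where anagrams come from in a tree. Suppose two distinct leaves $x,y$ lie at the same depth $h$ and satisfy $\mu(x)=\mu(y)$. Let $w$ be their lowest common ancestor, and let $L_x,L_y$ be the two downward legs from $w$ to $x$ and to $y$; these legs are edge-disjoint and both of length $h-\operatorname{depth}(w)\ge 1$. Since the root-to-$w$ portion is shared, cancelling it from $\mu(x)=\mu(y)$ shows that $L_x$ and $L_y$ carry equal colour multisets. The unique $x$-$y$ path traverses $L_x$ reversed and then $L_y$; its edge-colour sequence therefore splits into two halves of equal length with equal colour multiset, i.e.\ it is an anagram, contradicting anagram-freeness. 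Equivalently, in any anagram-free $k$-edge-colouring the leaves at depth $h$ must receive pairwise distinct multisets $\mu$.

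The argument then finishes by counting. The number of multisets of size $h$ over $k$ colours is $\binom{h+k-1}{k-1}$, a polynomial of degree $k-1$ in $h$, whereas $B_h$ has $2^h$ leaves at depth $h$. Hence for every $h$ exceeding some threshold $h_0(k)$ we have $2^h > \binom{h+k-1}{k-1}$, the pigeonhole principle forces two depth-$h$ leaves with equal multiset, and the previous paragraph produces an anagram. Thus $\phi'(B_h) > k$ for all $h \ge h_0(k)$, and since $k$ was arbitrary the values $\phi'(B_h)$ are unbounded, which proves the theorem.

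I expect the main obstacle to be isolating the correct mechanism in the second paragraph: recognising that an abelian square along a tree path is exactly a pair of equal-length, equal-multiset legs meeting at a common vertex, and that a Parikh-vector collision among equal-depth root paths delivers precisely such a configuration (the crucial equal-length condition being guaranteed by the shared prefix that cancels). Once this is in hand the remaining ingredients — the degree bound for $B_h$, the multiset count, and the pigeonhole step — are routine, and it is worth noting that no appeal to Theorem~\ref{thm:noDegreeBound} and no subdivision of edges is required.
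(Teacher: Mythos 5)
Your proposal is correct and follows essentially the same route as the paper's proof: the complete binary tree of height $h$, pigeonhole on the colour multisets of root-to-leaf paths (the paper uses the weaker bound $(h+1)^c$ in place of your exact count $\binom{h+k-1}{k-1}$, which changes nothing), and the lowest-common-ancestor cancellation to turn a multiset collision into an anagram along the leaf-to-leaf path. No meaningful differences to report.
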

	Theorem \ref{thm:noDegreeBoundEdge} is for a more restricted class of graphs than Theorem \ref{thm:noDegreeBound}, so Theorem \ref{thm:noDegreeBoundEdge} says more about $\phi'$ than Theorem \ref{thm:noDegreeBound} does about $\phi$. The degree bound of $3$ in Theorems \ref{thm:noDegreeBound} and \ref{thm:noDegreeBoundEdge} is best possible since $\phi(P) \leq 4$ for all paths $P$, which implies $\phi(C) \leq 5$ for all cycles $C$. Indeed, to anagram-free $5$-colour a cycle assign one vertex a unique colour and then anagram-free $4$-colour the remaining vertices. The proof that $\phi'(C) \leq 5$ is analogous. Whether this bound can be improved to $4$ is an open problem. The analogous problem for square-free colouring was solved by Currie \cite{currie2002there}, who showed that $\pi(C) \leq 3$ for every cycle with the exception of order $5$, $7$, $9$, $10$, $14$ and $17$.
	
	\subsection{Bounding $\phi$ with given radius or pathwidth}
	
	Our initial bounds for $\phi$ and $\phi'$ motivate further study of $\phi$ for trees. First, note that Bre{\v{s}}ar \etal \cite{brevsar2007nonrepetitivetree} proved that $\pi(T) \leq 4$ for every tree $T$. In contrast, we prove the following:
	\begin{theorem}\label{thm:treeDegreeBound}
		Trees have unbounded anagram-free chromatic number.
	\end{theorem}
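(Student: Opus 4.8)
The plan is to exhibit an explicit family of trees whose anagram-free chromatic number grows without bound, and complete binary trees are the natural candidate: they have maximum degree $3$ yet unbounded pathwidth (the complete binary tree of height $h$ has pathwidth $\ceil{h/2}$). Writing $B_h$ for the complete binary tree of height $h$, it suffices to prove that for every $k$ there is an $h$ with no anagram-free $k$-colouring of $B_h$, equivalently that $\phi(B_h)\to\infty$ as $h\to\infty$. Note that Theorem~\ref{thm:noDegreeBound} already supplies \emph{graphs} of maximum degree $3$ with unbounded $\phi$, but those graphs contain cycles; since $\phi$ is monotone under taking subgraphs, a spanning tree of such a graph inherits no lower bound, so a genuinely new argument is needed for trees.

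The main tool I would use is to track, for each vertex $v$ at depth $d$, the Parikh vector $P(v)\in\mathbb{Z}_{\geq 0}^{k}$ recording how many vertices of each colour lie on the root-to-$v$ path. Along any single downward path, anagram-freeness is exactly the statement that no ``midpoint'' relation $P(x)+P(z)=2P(y)$ holds for equally spaced ancestors $x\prec y\prec z$; by Ker{\"a}nen's bound this alone costs only a constant number of colours, so all the leverage must come from the branching. Fix a $k$-colouring of $B_h$ and a depth $d$. The $2^{d}$ vertices at depth $d$ carry Parikh vectors drawn from a set of size at most $\binom{d+k-1}{k-1}\leq (d+1)^{k-1}$, which is dwarfed by $2^{d}$ once $d$ is large. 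Hence many vertices at depth $d$ share a Parikh vector; choosing two such vertices $u,w$ whose lowest common ancestor $a$ splits them into different subtrees yields two descending legs of equal length meeting at the apex $a$, and the bent path from $u$ through $a$ to $w$ is where I would locate an anagram.

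First I would make the pigeonhole quantitative, so that $2^{d}>(d+1)^{k-1}$ forces two legs below a common apex with matching Parikh vectors, and by taking the apex as deep as possible I can arrange the legs to emanate from its two children. The hard part is the final step: converting a Parikh collision into a genuine \emph{contiguous} anagram. The obstruction is the apex vertex $a$, which sits in the exact middle of the $u$--$a$--$w$ path and so belongs to one half but not the other; the anagram condition is therefore not ``equal leg Parikh vectors'' but a shifted relation of the form $P(\mathrm{leg}_u)=e_{f(a)}+P(\mathrm{leg}_w')$, where $f(a)$ is the colour of $a$ and $\mathrm{leg}_w'$ is the $w$-leg with one endpoint deleted. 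My plan to absorb this one-coordinate shift is to exploit the exponential surplus of collisions: restricting to depth-$d$ vertices whose leg matches in Parikh vector \emph{and} whose incoming edge has a prescribed colour still leaves exponentially many candidates against only polynomially many vector values, so one can select $u,w$ realising the shifted relation exactly. Making this selection rigorous, and controlling the parity of the two leg lengths so that the relevant balanced subword has even length, is the technical heart of the argument and the step I expect to be the main obstacle.

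An alternative route I would keep in reserve is to prove a lower bound of the form $\phi(T)\geq g(\pw(T))$ with $g$ unbounded and then invoke that complete binary trees have unbounded pathwidth; this packages the same branching phenomenon as a pathwidth statement, at the cost of relocating the combinatorial difficulty into the pathwidth lemma.
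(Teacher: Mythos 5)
Your choice of target is the core problem: you set out to prove unboundedness on complete \emph{binary} trees, which is strictly harder than Theorem~\ref{thm:treeDegreeBound} and is not what the paper proves. The paper's proof (Theorems~\ref{thm:treeCompleteDary} and~\ref{thm:treeLowerHeight}) uses complete $d$-ary trees with $d$ growing rapidly in the number of colours $c$ (namely $d=c^{c+1}$ and height $h=c+1$), whereas the question of whether $\phi$ is bounded on trees of maximum degree $3$ --- with the complete binary tree as the key example --- is stated in the paper as its main open problem, and was resolved only in the independent work of Kam{\v{c}}ev, {\L}uczak and Sudakov \cite{KLS16}. So the step you yourself flag as ``the technical heart of the argument and the step I expect to be the main obstacle'' is not a technicality to be discharged later; it is exactly the open problem. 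Concretely, the gap is the apex shift: from $P(u)=P(w)$ you need, say, $c(u)=c(a)$ where $a=\mathrm{lca}(u,w)$ (so that deleting the endpoint $u$ balances the bent path), and your proposed fix --- restricting attention to candidate vertices with a prescribed colour or incoming-edge colour --- cannot deliver this, because $c(a)$ is a function of the \emph{pair} $(u,w)$, not of the individual candidates. Nothing in the depth-$d$ pigeonhole prevents a colouring from arranging that the lowest common ancestor of any two same-Parikh, same-colour vertices at depth $d$ never repeats that colour; ruling this out is precisely where the difficulty lies. Your fallback route, a lower bound $\phi(T)\geq g(\pw(T))$ with $g$ unbounded, is likewise raised in the paper only as an open question (whether pathwidth is \emph{tied} to $\phi$ on trees), not as an available lemma.

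The paper sidesteps this difficulty entirely by making the arity large and pigeonholing on colour \emph{sequences} rather than colour multisets: among the $d^h$ root-to-leaf paths there are at most $c^h$ colour sequences, so at least $(d/c)^h$ leaves have identical root-to-leaf sequences, and the subtree $R$ spanned by their ancestors is coloured by level. A counting argument then shows $R$ has at least $c+1$ ``good'' (branching) levels, so two good levels $l_a$ and $l_b$ share a colour; taking $u,w\in l_b$ whose lowest common ancestor $v$ lies in $l_a$, the bent path from $u$ through $v$ to $w$ with one endpoint deleted is automatically an anagram: level-colouring makes the two legs colour-identical, and the shared colour of levels $l_a$ and $l_b$ absorbs precisely the one-coordinate shift that blocks your argument. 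If you want to salvage your write-up along these lines, replace binary trees by $d$-ary trees with $d$ depending on $c$, and replace the Parikh-vector pigeonhole by a pigeonhole on colour sequences; as it stands, the proposal has an unclosed gap at its decisive step.
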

	We also study $\phi$ for trees as a function of radius. The \textit{radius} of a tree $T$ is the minimum, taken over all vertices $v$ in $T$, of the maximum distance of a vertex from $v$. Such a vertex $v$ is called a \textit{centre} of $T$. We obtain the following tight bound on $\phi$ as a function of radius.
	\begin{theorem}\label{thm:treeHeightBound}
		Every tree $T$ of radius $h$ has $\phi(T) \leq h + 1$. Moreover, for every $h \geq 0$ there is a tree $T$ of radius $h$ such that $\phi(T) \geq h$.
	\end{theorem}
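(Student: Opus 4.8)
For the upper bound my plan is to root $T$ at a centre $v$, so every vertex has depth at most $h$, and colour each vertex by its depth, using the $h+1$ colours $\{0,1,\dots,h\}$. The key observation is that along any path $Q$ the depth sequence is unimodal: if $u$ is the vertex of $Q$ nearest the root, then depth strictly decreases by $1$ at each step from either endpoint down to $u$, so the colour sequence reads $\dots,m{+}1,m,m{+}1,\dots$ with a single minimum $m=d(u)$. To show the colouring is anagram-free I would take an arbitrary contiguous subword $B$ and use the fact that in an anagram every colour occurs an even number of times (its count equals twice the count in the first half). If $B$ avoids $u$ then $B$ is strictly monotone, hence a set of distinct consecutive integers, so its two halves are disjoint and cannot have equal multisets. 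If $B$ contains $u$ then its minimum value $m$ occurs exactly once in $B$ (only $u$ attains depth $m$ on $Q$), an odd number of times, so $B$ is not an anagram. Hence $\phi(T)\le h+1$.

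For the lower bound I would build, for each $k\ge 1$, a rooted tree $U_k$ of height $k$ that admits no anagram-free colouring with $k$ colours; taking $k=h$ then gives a tree of radius $h$ with $\phi(U_h)\ge h+1\ge h$. Set $U_1=K_2$: a monochromatic edge is the anagram $aa$ (equivalently, every anagram-free colouring is proper), so $U_1$ is not $1$-colourable. For $k\ge 2$, let $U_k$ consist of a new root $r$ joined to $M_k$ disjoint copies of $U_{k-1}$, each copy's root becoming a child of $r$, where $M_k>k^{|V(U_{k-1})|}$. Then $U_k$ has height $1+(k-1)=k$; moreover any non-root vertex has a farthest vertex in a different copy at distance at least $k+1$, so $r$ is the unique centre and the radius is exactly $k$.

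The heart of the proof is the inductive step. Suppose for contradiction that $U_k$ has an anagram-free colouring with $k$ colours, and let $x$ be the colour of $r$. Since $M_k$ exceeds the number $k^{|V(U_{k-1})|}$ of colourings of a single copy, the pigeonhole principle yields two copies $C,C'$ (rooted at children $c,c'$ of $r$) that are identically coloured under the canonical isomorphism. If colour $x$ does not occur in $C$, then the restriction of the colouring to $C$ is an anagram-free colouring of $U_{k-1}$ using at most $k-1$ colours, contradicting the induction hypothesis. Otherwise some vertex $w$ of $C$ has colour $x$; as $c$ is adjacent to $r$ it cannot have colour $x$, so $w$ lies at depth $s\ge 2$. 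Writing $r=a_0,a_1=c,\dots,a_s=w$ for the $r$--$w$ path and $b_1,\dots,b_{s-1}$ for the corresponding vertices of $C'$, the path $a_s,a_{s-1},\dots,a_1,r,b_1,\dots,b_{s-1}$ has colour sequence $x,\gamma_{s-1},\dots,\gamma_1,x,\gamma_1,\dots,\gamma_{s-1}$, where $\gamma_i$ is the common colour of $a_i$ and $b_i$; both halves have multiset $\{x,\gamma_1,\dots,\gamma_{s-1}\}$, so this is an anagram. Either way anagram-freeness fails, completing the induction.

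The step I expect to be the main obstacle is engineering this valley anagram, because the apex $r$ always separates the two arms of such a path, which forces the apex colour to participate in any balanced block. The construction sidesteps this by pairing two \emph{identically coloured} arms and letting the reappearance of the apex colour $x$ inside a copy supply exactly the letter that balances the two halves around $r$: the pigeonhole step manufactures the identical arms, while the colour-avoidance case feeds cleanly back into the recursion. Since each level of the recursion adds one edge to the height and forbids one further colour, this yields a tree of radius exactly $h$ with $\phi\ge h+1$, comfortably establishing the claimed bound $\phi(T)\ge h$.
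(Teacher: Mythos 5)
Your proof is correct, and it splits into one familiar half and one genuinely new half. The upper bound is essentially the paper's argument: the paper colours each vertex by its distance from a centre and observes that this is a \emph{centred} colouring (every subtree, in particular every path, contains a colour occurring exactly once), which is anagram-free because an anagram uses each colour an even number of times; your unimodal ``valley'' argument with the unique minimum-depth vertex is a direct, self-contained verification of exactly that parity fact for the same colouring. The lower bound is where you diverge. The paper (Theorems \ref{thm:treeCompleteDary} and \ref{thm:treeLowerHeight}) takes the complete $d$-ary tree of height $h$ with the uniform branching factor $d=(h-1)^h$: since only $c^h$ root-to-leaf colour sequences exist, many leaves share one sequence, and an analysis of ``good'' versus ``bad'' levels in the subtree those leaves span forces two equally coloured good levels, yielding a valley anagram. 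You instead build the tree recursively, taking the branching factor $M_k$ larger than the number $k^{|V(U_{k-1})|}$ of $k$-colourings of the previous tree, pigeonhole \emph{entire subtree colourings} to obtain two identically coloured copies, and then either recurse (if the root's colour is absent from a copy, that copy is an anagram-free $(k-1)$-colouring of $U_{k-1}$, contradicting the induction) or close a valley anagram through the root using the reappearance of the root's colour inside a copy. Your route is cleaner and gives the marginally stronger conclusion $\phi(U_h)\geq h+1$ for a radius-$h$ tree; what the paper's more delicate argument buys is a \emph{uniform-degree} example with a tunable trade-off between degree and height, which is reused to prove Theorem \ref{thm:treeLowerDegree} (bounded-degree trees with large $\phi$) --- something your construction, whose degrees grow astronomically and unevenly with $k$, cannot provide. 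Two trivial loose ends in your write-up: the case $h=0$ (a single vertex, $\phi=1\geq 0$) lies outside your induction but is immediate, and your radius computation needs $M_k\geq 2$, which it plainly satisfies.
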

	The bound in Theorem \ref{thm:treeHeightBound} is poor for paths since the radius of a path is roughly half its length. To address this, we prove a bound on $\phi$ for trees that is bounded for paths. To do so we use pathwidth, which is a well studied parameter in square-free colouring \cite{gkagol2016pathwidth, dujmovic2011nonrepetitive} as well as more generally. Let $\pw(T)$ denote the pathwidth of a tree $T$. 
	
	\begin{theorem}\label{thm:treePathwidthBound}
		For every tree $T$, $\phi(T) \leq 4\pw(T) + 1$. Moreover, for every $p \geq 0$ there is a tree $T$ such that $\phi(T) \geq p \geq \pw(T)$.
	\end{theorem}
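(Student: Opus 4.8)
The statement has two halves; the upper bound carries essentially all of the work, while the lower bound I would obtain as a quick corollary of what is already proved.

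For the lower bound, I would argue directly from the lower bound in Theorem \ref{thm:treeHeightBound}, which supplies for each $p \geq 0$ a tree $T$ of radius $p$ with $\phi(T) \geq p$. It then suffices to show that a tree of radius $h$ has pathwidth at most $h$. Rooting $T$ at a centre produces a rooted tree of height $h$, and one builds a path decomposition of width $h$ by induction on $h$: take the decompositions of the subtrees hanging off the root, each of height at most $h-1$ and hence width at most $h-1$ by induction, insert the root into every bag, and concatenate. Applying this to the radius-$p$ tree of Theorem \ref{thm:treeHeightBound} gives $\phi(T) \geq p \geq \pw(T)$, with the degenerate case $p=0$ being a single vertex.

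For the upper bound, the plan is to combine the recursive structure of pathwidth in trees with Ker\"anen's theorem $\phi(P) \leq 4$. The starting point is the spine decomposition: if $\pw(T) = p$, then there is a path $S$ (a spine) such that every component of $T - V(S)$ has pathwidth at most $p-1$. Iterating this assigns to every vertex a level in $\{0,1,\dots,p\}$, namely the pathwidth of the component on whose spine it lies, so that the level-$\ell$ vertices form a disjoint union of paths and the level-$0$ vertices are isolated. I would then colour each spine with a Ker\"anen anagram-free $4$-colouring, recording the level in the colour so that a colour determines the level of its vertex, and give all level-$0$ vertices one common colour. This uses $4$ colours on each of the levels $1,\dots,p$ and one more on level $0$, for a total of $4p+1$.

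The crux is verifying that this colouring is anagram-free, and the key fact is a contiguity lemma: for any subpath $\rho$ of $T$, if $\ell$ is the maximum level attained on $\rho$, then the level-$\ell$ vertices of $\rho$ occupy a contiguous block lying within a single spine $S$. This follows by induction on $p$, since deleting $S$ disconnects $T$ so that a path reaching two distinct level-$\ell$ spines must pass through a strictly higher level, and since the intersection of the two paths $\rho$ and $S$ in a tree is a subpath of each. Granting the lemma, suppose some subpath is an anagram $W_1 W_2$. Because colours determine levels, the level-$\ell$ colours split evenly between $W_1$ and $W_2$; as they occupy a single contiguous block $\sigma$ of $\rho$, this block must straddle the midpoint, say $\sigma = \sigma_1\sigma_2$ with $\sigma_1$ a suffix of $W_1$ and $\sigma_2$ a prefix of $W_2$, and the even split forces $\sigma_1$ and $\sigma_2$ to have equal colour multisets. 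Then $\sigma$ is an anagram contained in the spine $S$, contradicting the anagram-freeness of the Ker\"anen colouring of $S$. The main obstacle I anticipate is justifying the spine decomposition rigorously, since the existence of a spine whose deletion drops the pathwidth by one is a genuine structural theorem about trees rather than something immediate, and making the level/colour bookkeeping precise enough that ``a colour determines its level'' holds literally; once these are in place, the contiguity lemma and the anagram argument are the real content, and the constant $4$ is inherited directly from Ker\"anen's bound.
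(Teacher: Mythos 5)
Your proposal is correct and takes essentially the same route as the paper: the upper bound is exactly the paper's Theorem \ref{thm:colourPathwidth}, an induction on pathwidth using the same spine lemma (Lemma \ref{lem:mainPath}, cited from Suderman) and Ker{\"a}nen's anagram-free $4$-colouring of paths, with four fresh colours per level of the recursion plus one for the edgeless base case; your global ``level function plus contiguity lemma'' formulation is just that induction unrolled, and your straddling/even-split argument makes explicit the step the paper states tersely (``the colours in $Q \cap P$ occur nowhere else in $Q$, therefore $Q$ is not an anagram''). The lower bound also matches the paper: a tree of radius $p$ with $\phi \geq p$ (Theorem \ref{thm:treeLowerHeight}, packaged in Theorem \ref{thm:treeHeightBound}) combined with the fact that the pathwidth of a tree is at most its radius.
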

	
	Note that since every tree $T$ on $n$ vertices has pathwidth $O(\log n)$, Theorem \ref{thm:treePathwidthBound} implies that $\phi(T) \leq O(\log n)$. It is open whether $\phi(G) \leq 5$ for graphs of pathwidth $1$ because not all trees of pathwidth $1$ are paths.
	
	\subsection{$k$-anagram-free colouring}
	
	We also consider the $k$-power generalisation of square-free colouring and apply it to anagram-free colouring. A \textit{$k$-power} is a word $W^k$ where $W$ is a non-empty word, for $k \geq 2$. In this setting a square is a $2$-power. A \textit{$k$-power-free colouring} is a colouring avoiding paths with $k$-power colour sequences. This definition can be applied for vertex or edge colouring. The corresponding \textit{$k$-power-free chromatic number} is $\pi_k(G)$ and the \textit{$k$-power-free chromatic index} is $\pi'_k(G)$ \cite{grytczuk2007nonrepetitive}. 
	
	In this paper, we introduce $k$-anagram-free colouring. $k$-anagrams, often called abelian $k$-powers, are an established object of study in combinatorics on words \cite{dekking1979strongly}. An \textit{$k$-anagram} is a word $W_1W_2\ldots W_k$ where each $W_i$ is a permutation of a non-empty word $W$, for $k \geq 2$. A colouring of a graph $G$ is \textit{$k$-anagram-free} if the sequence of colours on each subpath of $G$ is not a $k$-anagram. We apply this definition to both vertex and edge colourings. The corresponding \textit{$k$-anagram-free chromatic number} is denoted by $\phi_k(G)$ and the \textit{$k$-anagram-free chromatic index} is denoted by $\phi'_k(G)$.
	
	Every $k$-anagram contains an $(k-1)$-anagram so a $k$-anagram-free colouring it is also $(k+1)$-anagram-free. Thus, for every graph $G$,
	\begin{align}\label{eqn:kChain}
	\phi(G) = \phi_2(G) \geq \phi_3(G) \geq \phi_4(G) \geq \cdots
	\end{align}
	with an analogous expression for $\phi'_k$. Therefore we can immediately apply upper bounds for $\phi$ and $\phi'$ to $\phi_k$ and $\phi'_k$ respectively. With this in mind we first study lower bounds of $\phi_k$ and $\phi'_k$ and start by studying bounds as a function of maximum degree. We prove the following generalisation of Theorem \ref{thm:noDegreeBound}.
	\begin{theorem}\label{thm:noKPowerBound}
		For $k \geq 2$, $k$-anagram-free chromatic number is unbounded on graphs of maximum degree $k+1$.
	\end{theorem}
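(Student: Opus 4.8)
The plan is to generalise the construction that proves Theorem~\ref{thm:noDegreeBound}, which is exactly the case $k=2$ here (then $k+1=3$). For each target number of colours $c$ I would build a graph $G=G(c,k)$ of maximum degree $k+1$ admitting no $k$-anagram-free $c$-colouring; this gives $\phi_k(G) > c$, and letting $c$ grow proves the theorem. The base case already provides the skeleton, so the work is in identifying which features of the degree-$3$ construction encode the number $2$ (the number of blocks in an abelian square) and replacing them by $k$.

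The degree bound is the natural place to see this. An abelian square compares $2$ consecutive blocks of equal colour-multiset, and in the $k=2$ construction the vertex that splices two candidate blocks onto the rest of a path carries one parent-edge and two block-edges, i.e.\ degree $3$. A $k$-anagram compares $k$ consecutive blocks, so I would replace each such binary splice by a $k$-ary one: a junction with one parent-edge and $k$ block-edges, of degree $k+1$. Arranging these junctions so that a single subpath threads $k$ parallel block-segments in sequence, while every other vertex keeps degree at most $k+1$, is the structural content of the construction.

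The combinatorial core is a counting argument. In the $k=2$ case one encodes each block by its colour-multiset vector in $\mathbb{Z}_{\geq 0}^{c}$, of which there are only polynomially many in the block length, and a pigeonhole forces two candidate blocks to share a vector; the construction then forces these two equal-multiset blocks to sit consecutively on a path, producing an abelian square. For general $k$ I would strengthen the pigeonhole so that some multiset vector is realised by at least $k$ candidate blocks (this only inflates the size of $G$ as a function of $c$ and $k$), and use the $k$-ary junctions to force $k$ such blocks to appear consecutively, producing a $k$-anagram. Equivalently, writing $v_0,v_1,\dots$ for the prefix-count vectors along a path, a $k$-anagram of block length $\ell$ starting at position $a$ is exactly an arithmetic progression $v_a, v_{a+\ell}, \dots, v_{a+k\ell}$ in $\mathbb{Z}^{c}$, and the goal is to force such a progression.

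The main obstacle is consecutiveness: pigeonhole readily yields many blocks with a common multiset, but a genuine $k$-anagram needs $k$ of them adjacent along one subpath. Making equal-multiset blocks forceable into $k$ consecutive positions is precisely what the $k$-ary branching must accomplish, and the delicate verification is that this can be done with maximum degree exactly $k+1$, without accidentally raising the degree or introducing unintended short subpaths that the argument must then also control.
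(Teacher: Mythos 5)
Your proposal correctly isolates the crux of the problem --- forcing $k$ equal-multiset blocks to lie \emph{consecutively} on a single path --- but it leaves that crux unresolved, and the mechanism you sketch cannot resolve it. A path visits each vertex at most once, so at a junction of degree $k+1$ a path can use at most two of the incident edges; hence of $k$ parallel block-segments hanging off such a junction, at most two can ever lie on a common subpath. This is exactly why the $k=2$ argument of Theorem~\ref{thm:noDegreeBound} works: the abelian square is ``folded'', going down one branch and back up a sibling branch, and the pairwise cancellation $M(P)=M(P')\cup M(R)$, $M(Q)=M(Q')\cup M(R)$ removes the shared ancestor path $R$. Neither the fold nor the cancellation has a $k$-wise analogue for $k\geq 3$: you cannot thread three or more branches through one junction, and $k$ leaves with equal root-to-leaf multisets meet at different pairwise least common ancestors, so the leftover segments need not have equal multisets (nor even equal sizes). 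Consequently, strengthening the pigeonhole to produce $k$ blocks sharing a multiset does not help: those blocks sit in uncontrolled positions, whereas a $k$-anagram needs agreement among $k$ \emph{specific, sequentially placed} blocks.

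The paper's proof supplies the missing idea, and it is not a pigeonhole on equal blocks at all. It constructs, by induction on $t$, a gadget $G$ of maximum degree $k+1$ with two degree-$1$ terminals $u,v$ satisfying an either/or invariant: every colouring either already contains a $k$-anagram, or realises at least $(k/(k-1))^t$ \emph{distinct} colour multisets on $uv$-paths of length $4t$. The inductive step chains $k$ disjoint copies $G_1,\dots,G_k$ terminal-to-terminal in a snake ($v_1v_2$, $u_2u_3$, $v_3v_4$, \dots), so consecutiveness is hard-wired into the construction: if some multiset is realisable in \emph{every} copy, the $k$ realising paths concatenate into a $k$-anagram. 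If no multiset lies in all $k$ sets $D_i$, a double count gives $|\mathcal{U}|(k-1)\geq \sum_i |D_i|$ for $\mathcal{U}=\bigcup_i D_i$, so the diversity of multisets grows by a factor $k/(k-1)$ per level. The top-level contradiction is then ``exponentially many distinct multisets versus the polynomial bound $(4t+2)^c$ on all possible multisets'', not a collision between blocks. Note also that the degree-$(k+1)$ hubs in that construction (the vertices $a$, $b$ adjacent to all the $u_i$, respectively all the $v_i$) serve only to route a $uv$-path through exactly one copy for the counting step --- not, as in your plan, to splice $k$ blocks in sequence. So your proposal has a genuine gap at precisely the step you flag as delicate, and closing it requires the recursive diversity-amplification argument rather than any refinement of the pigeonhole.
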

	Theorem \ref{thm:noDegreeBound} is implied by Theorem \ref{thm:noKPowerBound} with $k=2$. Note that the degree bound in Theorem \ref{thm:noKPowerBound} depends on $k$ and it is open whether such a result holds for a degree bound independent of $k$. We also investigate upper bounds for $k$-anagram-free colouring and prove the following contrasting result.
	\begin{theorem}\label{thm:kPowerTreeBound}
		If $k \geq 4$ then $\phi_k(T) \leq 4$ and $\phi'_k(T) \leq 4$ for every tree $T$.
	\end{theorem}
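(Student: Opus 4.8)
The plan is to prove both bounds at once using the monotonicity recorded in \eqref{eqn:kChain}: since $\phi_k(G)\leq\phi_4(G)$ and $\phi'_k(G)\leq\phi'_4(G)$ for all $k\geq 4$, it suffices to exhibit a $4$-anagram-free vertex (resp.\ edge) $4$-colouring of an arbitrary tree $T$. The colouring will be \emph{radial}. Root $T$ and give the root depth $0$. By Ker\"{a}nen's construction (this is exactly the statement $\phi(P)\leq 4$) there is an arbitrarily long word $D = D_0 D_1 D_2 \cdots$ over $\{1,2,3,4\}$ containing no abelian square; take it at least as long as the radius of $T$. Colour each vertex $v$ by $D_{\mathrm{depth}(v)}$. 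Since reversing a word sends abelian squares to abelian squares, the reverse of $D$ is also abelian-square-free, a fact I will use repeatedly. The key structural observation is that any subpath of a tree is a \emph{valley}: the depths along it strictly decrease to the depth of the apex (the lowest common ancestor of the endpoints) and then strictly increase, with consecutive depths differing by $1$. Hence the colour word of any subpath has the form (a factor of the reverse of $D$)$\,\cdot\,$(a factor of $D$), the two parts meeting at the single apex letter $D_{\mathrm{depth}(\text{apex})}$.

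The heart of the argument is the apex analysis, which I expect to be the only real obstacle. Suppose some subpath has a factor that is a $4$-anagram $B_1B_2B_3B_4$, so that $B_1,\dots,B_4$ are consecutive blocks of equal length sharing a common colour-multiset. The apex vertex lies in exactly one block $B_r$; every block before $B_r$ lies entirely in the strictly descending run and every block after $B_r$ lies entirely in the strictly ascending run. The three blocks other than $B_r$ are split between these two monotone runs, so by pigeonhole one run contains at least two of them. Two consecutive blocks lying in the same monotone run are a contiguous factor of $D$ (ascending side) or of its reverse (descending side), and having equal multisets they constitute an abelian square — contradicting the abelian-square-freeness of $D$ and of its reverse. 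Monotone subpaths (apex at an endpoint) are handled by the same statement with $r=1$ or $r=4$, their colour words being factors of $D$ or its reverse and hence abelian-square-free. This is also exactly where $k\geq 4$ is needed: for $k=3$ the apex block $B_2$ can sit between two \emph{single} blocks $B_1,B_3$, one on each side, forcing no abelian square, which is consistent with $\phi_3$ not being controlled this way.

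For the chromatic index I would run the identical scheme, colouring each edge by $D_\ell$ where $\ell$ is the depth of its deeper endpoint. Along a valley subpath the edge-levels strictly decrease to the apex and then strictly increase, except that the two edges incident to the apex both have the apex level; thus the edge colour word is (a factor of the reverse of $D$)$\,\cdot\,$(a factor of $D$) meeting now in a \emph{doubled} central letter rather than a single one. The counting is unchanged: in a putative $4$-anagram $B_1B_2B_3B_4$, the centre of the valley either falls between two blocks or inside one block; in the first case all four blocks, and in the second case the three blocks avoiding the centre, are distributed over the two monotone runs, so again one run carries two consecutive equal-multiset blocks and yields an abelian square in $D$ or its reverse. (As a sanity check, consecutive edges on a path get colours $D_d,D_{d+1}$, which differ because $D$ has no repeated letter, matching the bound $\phi'\geq\Delta$.)

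Finally I would assemble the pieces: the radial colouring uses four colours, every subpath is a valley, and the apex (or doubled-centre) analysis shows no factor of any subpath's colour word is a $4$-anagram; hence $\phi_4(T)\leq 4$ and $\phi'_4(T)\leq 4$, and \eqref{eqn:kChain} upgrades this to $\phi_k(T)\leq 4$ and $\phi'_k(T)\leq 4$ for all $k\geq 4$. The genuinely delicate point to write carefully is the block-counting at the apex, in particular verifying that two blocks landing in the same monotone run really are a contiguous factor of $D$ (respectively its reverse) so that ``equal multiset $+$ consecutive'' legitimately produces an abelian square; everything else is bookkeeping about the unimodal depth profile of tree paths.
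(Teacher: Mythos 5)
Your proposal is correct and takes essentially the same approach as the paper: colour by depth using Ker{\"a}nen's abelian-square-free word on four letters, locate the block containing the apex (the path's vertex closest to the root), and note that two consecutive blocks lying on one monotone run form an abelian square in the word or its reverse, which $k\geq 4$ guarantees must exist. The only differences are cosmetic -- the paper argues for each $k \geq 4$ directly rather than reducing to $k=4$ via Equation~(\ref{eqn:kChain}), and it treats the edge version tersely where you spell out the doubled central letter at the apex.
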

	This result is somewhat surprising given Theorems \ref{thm:noDegreeBound} and \ref{thm:treeDegreeBound} which say that $\phi_2$ and $\phi'_2$ are unbounded on trees. Theorem \ref{thm:kPowerTreeBound} leaves a gap at $k=3$, which motivates the question of whether $\phi_3$ and $\phi'_3$ are bounded on trees. We have upper bounds for $\phi_3$ on trees, in terms of radius and pathwidth, due to Equation (\ref{eqn:kChain}) and Theorems \ref{thm:treeHeightBound} and \ref{thm:treePathwidthBound}. We prove a similar upper bound for $\phi'_3$ in the following theorem.
	\begin{theorem}\label{thm:kTreeIndexPathwidth}
	 	For every tree $T$, $\phi'_3(T) \leq 4\pw(T)$.
	\end{theorem}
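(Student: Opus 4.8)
The plan is to induct on $p := \pw(T)$, colouring with four fresh colours per level of a spine decomposition so that the total is $4p$. I would use the standard structural fact for trees (the same decomposition underlying Theorem~\ref{thm:treePathwidthBound}): if $\pw(T) = p \geq 1$ then $T$ contains a path $P$ such that every component of $T - V(P)$ has pathwidth at most $p-1$. Take such a spine $P$, reserve a palette $A_p$ of four new colours for the \emph{spine-caterpillar} consisting of $P$ together with the single edge joining each component of $T - V(P)$ to $P$, and colour the edges strictly inside each component recursively, letting all the components share one common palette of size $4(p-1)$ that is disjoint from $A_p$ and guaranteed by induction. Since $|A_p| = 4$, this uses $4p$ colours; the base case $p = 0$ is a single vertex with no edges.

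The heart of the argument is a base lemma: the spine-caterpillar admits a $3$-anagram-free edge colouring with four colours. I would colour the spine edges (a path) by a word over three symbols containing no abelian cube --- such words of every length exist by Dekking~\cite{dekking1979strongly} --- and colour every attachment edge with the single fourth colour. A subpath of this caterpillar is a spine segment with at most one attachment edge at each end, so it uses at most two edges of the fourth colour. In a $3$-anagram each of the three equal blocks contains the same number of fourth-colour edges, so that number is a multiple of $3$; being at most $2$, it must be $0$. Hence any $3$-anagram would lie entirely on the spine, where the colour word is abelian-cube-free, a contradiction. This divisibility is exactly why $k = 3$ is easier than $k = 2$: two legs can be split evenly between two halves but never among three blocks, which is also what lets us shave the extra $+1$ appearing in Theorem~\ref{thm:treePathwidthBound}.

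It remains to glue the levels. Let $Q$ be any subpath of $T$ and consider its edges incident to $P$. Because $T$ is a tree, $Q \cap P$ is a subpath, so these edges form a contiguous block of $Q$: an attachment edge, the traversed spine edges (possibly none), and a second attachment edge, flanked by edges interior to at most two components. Suppose the colour sequence of $Q$ were a $3$-anagram $W_1 W_2 W_3$. The colours of $A_p$ occur only in this contiguous block, so each $W_r$ contains the same number of them; but a contiguous interval meeting three consecutive equal blocks in equal amounts is either empty or total. If empty, $Q$ avoids $P$ and lies inside a single component, contradicting the inductive hypothesis; if total, $Q$ lies inside the spine-caterpillar, contradicting the base lemma. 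Either way $Q$ is not a $3$-anagram, completing the induction.

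I expect the main obstacle to be this gluing step: verifying that the edges of $Q$ incident to the spine are genuinely contiguous, and invoking the elementary fact that a contiguous interval meeting three equal thirds equally is trivial or total, together with checking that reusing one palette across all sibling components is harmless --- any path visiting two components must cross $P$, landing in the mixed case just ruled out. Setting up the base lemma so that four colours truly suffice, leaning on Dekking's abelian-cube-free ternary words plus the mod-$3$ counting for the legs, is the other delicate point.
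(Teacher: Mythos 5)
Your proposal is correct and takes essentially the same route as the paper's proof: induction on pathwidth via the path-removal lemma (Lemma~\ref{lem:mainPath}), Dekking's abelian-cube-free ternary word on the spine, a single fourth colour on the attachment edges, and one shared palette for all components of $T - V(P)$. Your explicit mod-$3$ counting for the attachment edges and the ``empty or total'' contiguity argument merely spell out, in welcome detail, the verification that the paper states tersely.
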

	These bounds for $\phi_3$ and $\phi'_3$ depend on pathwidth or radius. The question of whether $\phi_3$ and $\phi'_3$ are bounded on trees is an open problem.
	
	We also give tighter bounds than those in Theorem \ref{thm:kPowerTreeBound} for larger values of $k$. We improve the bound to $3$ for $k \geq 6$ and to $2$ for $k \geq 8$ by using results on $3$-anagram-free and $4$-anagram-free words \cite{dekking1979strongly}.
	
	\section{Lower Bounds}\label{sec:lower}
	In this section we prove lower bounds for $\phi$ and $\phi'$. Most of these bounds depend upon counting the occurrence of colours in each half of a path, which motivates the following definition. A \textit{colour multiset} of size $n$ on $c$ colours is a multiset of size $n$ with entries from $[c]$ where $[c] := \{1,2,\ldots,c\}$. Let $\mathcal{M}_{n,c}$ be the set of colour multisets of size $n$ on $c$ colours. Note that $|\mathcal{M}_{n,c}|$ equals the number of ways to place $n$ unlabelled balls in $c$ labelled boxes, which is well known to equal $\binom{n + c - 1}{ c - 1}$; see \cite[Section 1.9]{stanley2011enumerative}. We give a weaker bound by noting that the number of occurrences of each colour is in $\{0, 1, \ldots ,n\}$, therefore
	\begin{align}\label{eqn:colourMult}
		|\mathcal{M}_{n,c}| \leq (n+1)^c.
	\end{align}
	This simple bound suffices for our needs because we only require that, once $c$ is fixed, $|\mathcal{M}_{n,c}|$ is bounded by a polynomial in $n$. For a coloured graph $G$, let $M(G)$ be the multiset of colours that occur in $G$ and call $M(G)$ the \textit{colour multiset} of $G$. A \textit{$c$-colouring} of a graph $G$ is a colouring of $G$, either for vertices or edges, where the colour set has size $c$. Note that if $G$ is vertex $c$-coloured then $M(G) \in \mathcal{M}_{|V(G)|, c}$, and if $G$ is edge $c$-coloured then $M(G) \in \mathcal{M}_{|E(G)|, c}$. We intentionally allow the definition to apply to edge or vertex colouring and in each case the usage will be clear from the context.
	
	The function $M$ is useful for the analysis of anagram-free colouring. Note that a vertex coloured path $v_1,\ldots,v_{2i}$ is an anagram if and only if
	\begin{align*}
	M(v_1,\ldots,v_i)=M(v_{i+1},\ldots,v_{2i}).
	\end{align*}
	Similarly, for $i \geq 1$, an edge coloured path $v_1,\ldots,v_{2i + 1}$ is an anagram if and only if
	\begin{align*}
	M(v_1v_2,\ldots,v_iv_{i + 1})=M(v_{i+1}v_{i+2},\ldots,v_{2i}v_{2i + 1}).
	\end{align*}
	The indices in these equations highlight a distinction between $\phi$ and $\phi'$. In a vertex colouring only the paths of even order can be anagrams. However, in an edge colouring only the paths of even length can be anagrams.
	
	\subsection{Edge colouring}
	
	We start with the proof of Theorem \ref{thm:noDegreeBoundEdge}, which says that $\phi'$ is unbounded on trees of maximum degree $3$. The proof uses the fact that the number of leaves in a complete binary tree grows exponentially with height while, for fixed $c$, $|\mathcal{M}_{n,c}|$ is bounded by a polynomial in $n$. We then associate colour multisets to leaves and show that the tree contains an anagram if two leaves share a colour multiset. A \textit{complete binary tree} is a rooted tree such that every non-leaf vertex has two children and the leaves have equal distance to the root.
	
	\begingroup
	\def\thetheorem{\ref{thm:noDegreeBoundEdge}}
	\begin{theorem}
	Trees of maximum degree $3$ have unbounded anagram-free chromatic index.
	\end{theorem}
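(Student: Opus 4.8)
The plan is to show that for every number of colours $c$ there is a tree of maximum degree $3$ admitting no anagram-free edge $c$-colouring; since $c$ is arbitrary, this makes $\phi'$ unbounded on such trees. The candidate is the complete binary tree $T_h$ of height $h$: its root has degree $2$, every internal non-root vertex has degree $3$ (two children and one parent), and every leaf has degree $1$, so $\Delta(T_h)=3$. The tension to exploit is that $T_h$ has $2^h$ leaves, growing exponentially in $h$, whereas for fixed $c$ the number of colour multisets available on a root-to-leaf path grows only polynomially in $h$.

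First I would fix an arbitrary anagram-free edge $c$-colouring of $T_h$ and, for each leaf $u$, record the colour multiset $M_u$ of the unique root-to-$u$ path. This path has exactly $h$ edges, so $M_u\in\mathcal{M}_{h,c}$, and by inequality~\eqref{eqn:colourMult} there are at most $(h+1)^c$ possible values. Since $2^h>(h+1)^c$ for all sufficiently large $h$, fixing such an $h$ and applying the pigeonhole principle yields two distinct leaves $u\neq v$ with $M_u=M_v$.

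The heart of the argument is to extract an anagram from such a pair. Let $w$ be the lowest common ancestor of $u$ and $v$, and write $M_u=A\uplus B_u$ and $M_v=A\uplus B_v$, where $A$ is the colour multiset of the shared root-to-$w$ portion and $B_u,B_v$ are the multisets of the $w$-to-$u$ and $w$-to-$v$ portions. Cancelling the common part $A$ from $M_u=M_v$ gives $B_u=B_v$. Because all leaves of a complete binary tree lie at depth $h$, the $w$-to-$u$ and $w$-to-$v$ portions have equal edge-length, so on the unique $u$-to-$v$ path the midpoint vertex is exactly $w$ and the two halves carry the equal colour multisets $B_u$ and $B_v$. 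By the edge-anagram criterion stated in the excerpt, this $u$-to-$v$ path is an anagram, contradicting the assumed colouring; hence $\phi'(T_h)>c$, completing the argument.

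The counting and pigeonhole steps are routine once inequality~\eqref{eqn:colourMult} is in hand, so the step I would treat as needing the most care is the extraction in the third paragraph: one must check cleanly that equal root-to-leaf multisets cancel to equal $w$-to-leaf multisets, and that the completeness of the binary tree forces the two halves to have the same length, placing $w$ precisely at the midpoint required by the edge-anagram criterion. These two observations are what turn a coincidence of multisets into a genuine forbidden subpath.
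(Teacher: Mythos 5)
Your proposal is correct and follows essentially the same argument as the paper: the complete binary tree of height $h$ with $2^h > (h+1)^c$, pigeonhole on root-to-leaf colour multisets via inequality~\eqref{eqn:colourMult}, and cancellation at the lowest common ancestor to produce an anagram on the leaf-to-leaf path. The only cosmetic difference is that you phrase it as a contradiction and invoke completeness to place the midpoint, whereas the paper simply notes that equal multisets on the two branches already force the path to be an anagram (equal multisets have equal size, so the midpoint claim is automatic).
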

	\addtocounter{theorem}{-1}
	\endgroup 
	\begin{proof}
		Fix $c \geq 1$ and choose $h \in \mathbb{Z}^+$ so that $2^{h} > (h + 1)^c$. Let $T$ be the rooted complete binary tree of height $h$ with root vertex $r$. Fix an arbitrary edge $c$-colouring of $T$. By our choice of $h$ and Equation (\ref{eqn:colourMult})
		\begin{align*}
			\#\text{leaves} = 2^h > (h+1)^c \geq |\mathcal{M}_{h, c}|.
		\end{align*}
		Since each root-to-leaf path in $T$ has $h$ edges, the number of leaves in $T$ is greater than the number of distinct colour multisets on root-to-leaf paths in $T$. Therefore there are two leaves, $p$ and $q$, such that $M(P) = M(Q)$ where $P$ is the $rp$-path and $Q$ is the $rq$-path. As illustrated in Figure \ref{fig:noDegreeEdge}.
	\begin{figure}[h]
		\begin{center}
			\begin{tikzpicture}
				[line width=1.4pt,vertex/.style={circle,inner sep=0pt,minimum size=0.15cm}, scale = 0.085]
				
				\node[draw = black, fill = black] (v) at     ($(0,0)$) [vertex] {};
				
				\node[draw = black, fill = black] (v1) at    ($(20,  -10)$) [vertex] {};
				\node[draw = black, fill = black] (v0) at    ($(-20, -10)$) [vertex] {};
				
				\node[draw = black, fill = black] (v11) at   ($(30,  -20)$) [vertex] {};
				\node[draw = black, fill = black] (v10) at   ($(10,  -20)$) [vertex] {};
				\node[draw = black, fill = black] (v01) at   ($(-10, -20)$) [vertex] {};
				\node[draw = black, fill = black] (v00) at   ($(-30, -20)$) [vertex] {};
				
				\node[draw = black, fill = black] (v111) at  ($(35,  -30)$) [vertex] {};
				\node[draw = black, fill = black] (v110) at  ($(25,  -30)$) [vertex] {};
				\node[draw = black, fill = black] (v101) at  ($(15,  -30)$) [vertex] {};
				\node[draw = black, fill = black] (v100) at  ($(5,   -30)$) [vertex] {};
				\node[draw = black, fill = black] (v011) at  ($(-5,  -30)$) [vertex] {};
				\node[draw = black, fill = black] (v010) at  ($(-15, -30)$) [vertex] {};
				\node[draw = black, fill = black] (v001) at  ($(-25, -30)$) [vertex] {};
				\node[draw = black, fill = black] (v000) at  ($(-35, -30)$) [vertex] {};
				
				\node[draw = black, fill = black] (v1111) at ($(37.5,  -40)$) [vertex] {};
				\node[draw = black, fill = black] (v1110) at ($(32.5,  -40)$) [vertex] {};
				\node[draw = black, fill = black] (v1101) at ($(27.5,  -40)$) [vertex] {};
				\node[draw = black, fill = black] (v1100) at ($(22.5,  -40)$) [vertex] {};
				\node[draw = black, fill = black] (v1011) at ($(17.5,  -40)$) [vertex] {};
				\node[draw = black, fill = black] (v1010) at ($(12.5,  -40)$) [vertex] {};
				\node[draw = black, fill = black] (v1001) at ($(7.5,   -40)$) [vertex] {};
				\node[draw = black, fill = black] (v1000) at ($(2.5,   -40)$) [vertex] {};
				\node[draw = black, fill = black] (v0111) at ($(-2.5,  -40)$) [vertex] {};
				\node[draw = black, fill = black] (v0110) at ($(-7.5,  -40)$) [vertex] {};
				\node[draw = black, fill = black] (v0101) at ($(-12.5, -40)$) [vertex] {};
				\node[draw = black, fill = black] (v0100) at ($(-17.5, -40)$) [vertex] {};
				\node[draw = black, fill = black] (v0011) at ($(-22.5, -40)$) [vertex] {};
				\node[draw = black, fill = black] (v0010) at ($(-27.5, -40)$) [vertex] {};
				\node[draw = black, fill = black] (v0001) at ($(-32.5, -40)$) [vertex] {};
				\node[draw = black, fill = black] (v0000) at ($(-37.5, -40)$) [vertex] {};

				\node[draw = white, fill = white  ] (p) at ($(-22.5, -44)$) [vertex] {$p$};
				\node[draw = white, fill = white  ] (q) at ($(-7.5,  -44)$) [vertex] {$q$};
				
				\draw[draw=yellow, line width = 5pt](v001)--(v0011);
				\draw[draw=yellow, line width = 5pt](v00)--(v001);
				\draw[draw=yellow, line width = 5pt](v0)--(v00);
				\draw[draw=yellow, line width = 5pt](v0)--(v01);
				\draw[draw=yellow, line width = 5pt](v01)--(v011);
				\draw[draw=yellow, line width = 5pt](v011)--(v0110);
				
				\draw[draw=red  ](v)--(v1);
				\draw[draw=blue ](v)--(v0);
				
				\draw[draw=green](v1)--(v11);
				\draw[draw=blue ](v1)--(v10);
				\draw[draw=red  ](v0)--(v01);
				\draw[draw=green](v0)--(v00);
				
				\draw[draw=blue ](v11)--(v111);
				\draw[draw=green](v11)--(v110);
				\draw[draw=green](v10)--(v101);
				\draw[draw=red  ](v10)--(v100);
				\draw[draw=blue ](v01)--(v011);
				\draw[draw=red  ](v01)--(v010);
				\draw[draw=blue ](v00)--(v001);
				\draw[draw=green](v00)--(v000);
				
				\draw[draw=green](v111)--(v1111);
				\draw[draw=blue ](v111)--(v1110);
				\draw[draw=red  ](v110)--(v1101);
				\draw[draw=blue ](v110)--(v1100);
				\draw[draw=green](v101)--(v1011);
				\draw[draw=blue ](v101)--(v1010);
				\draw[draw=red  ](v100)--(v1001);
				\draw[draw=red  ](v100)--(v1000);
				\draw[draw=blue ](v011)--(v0111);
				\draw[draw=green](v011)--(v0110);
				\draw[draw=green](v010)--(v0101);
				\draw[draw=green](v010)--(v0100);
				\draw[draw=red  ](v001)--(v0011);
				\draw[draw=blue ](v001)--(v0010);
				\draw[draw=green](v000)--(v0001);
				\draw[draw=red  ](v000)--(v0000);
				
			\end{tikzpicture}
		\end{center}
		\caption{The complete binary tree of height $4$ with an edge $3$-colouring. The leaves $p$ and $q$ have the same associated colour multiset so the $pq$-path is an anagram.
		}
		\label{fig:noDegreeEdge}
	\end{figure}
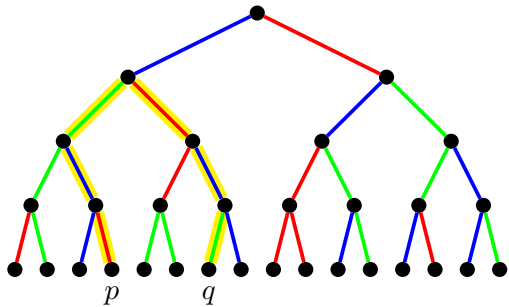
	
		Let $v$ be the least common ancestor of $p$ and $q$. Split these paths into three disjoint parts by defining $R$ as the $rv$-path, $P'$ as the $vp$-path and $Q'$ as the $vq$-path. Note that $E(R)$ is exactly the set of edges shared by $P$ and $Q$. Therefore
		\begin{align*}
			M(P) = M(P') \cup M(R) \text{ and }	M(Q) = M(Q') \cup M(R).
		\end{align*}
		Thus $M(P') = M(Q')$. Finally, note that $P' \cup Q'$ is a path and so $T$ contains an anagram.
	\end{proof}
	
	\subsection{Vertex colouring}
	
	Using the line graph construction, $\phi$ is unbounded on graphs of maximum degree $4$ as a corollary of Theorem \ref{thm:noDegreeBoundEdge}. The \textit{line graph} of a graph $G$, denoted $L(G)$, is the graph with $V(L(G))=E(G)$ and an edge between vertices of $L(G)$ which are incident to a common vertex in $G$. It is well known that for ordinary graph colouring $\chi'(G) = \chi(L(G))$. We have a similar relation for anagram-free colouring, however, equality does necessarily not hold. Line graphs satisfy $\phi'(G) \leq \phi(L(G))$ because every paths edge sequence $G$ correspond to a path $L(G)$. Theorem \ref{thm:noDegreeBoundEdge} says $\phi'$ is unbounded  for trees of maximum degree $3$ and, for these trees, $L(T)$ has maximum degree at most $4$. Also, as we have just shown, $\phi'(T) \leq \phi(L(T))$. Therefore $\phi$ is unbounded on graphs of maximum degree $4$. Ker{\"a}nen's result \cite{keranen1992abelian} implies $\phi(G) \leq 5$ for graphs of maximum degree $2$, thus there is a gap which motivates study of $\phi$ on graphs of maximum degree $3$.
	
	We prove Theorem \ref{thm:noDegreeBound}, that $\phi$ is unbounded on graphs of maximum degree $3$, with a method similar to the proof of Theorem \ref{thm:noDegreeBoundEdge}. We construct a graph with an exponential number of leaves, associate each leaf to a colour multiset and show that the graph contains an anagram if two leaves share a colour multiset.
	
	\begingroup
	\def\thetheorem{\ref{thm:noDegreeBound}}
	\begin{theorem}
		Graphs of maximum degree $3$ have unbounded anagram-free chromatic number.
	\end{theorem}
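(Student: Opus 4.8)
The plan is to imitate the proof of Theorem~\ref{thm:noDegreeBoundEdge} as closely as possible: build a graph $G_h$ of maximum degree $3$ with exponentially many leaves (in a height parameter $h$), attach to each leaf the colour multiset of a suitable leaf-path, and exploit that for fixed $c$ the number of available multisets $|\mathcal{M}_{n,c}| \leq (n+1)^c$ grows only polynomially in $n$, whereas the number of leaves grows like $2^h$. Fixing any $c$-colouring and choosing $h$ with $2^{h} > (h+1)^c$, pigeonhole then yields two leaves $p,q$ carrying the same multiset, and the goal is to exhibit the $pq$-path as an anagram. I would set the associated multiset to be $M(P)$, where $P$ is the path from a fixed apex to the leaf, so that the shared initial segment of two colliding leaf-paths cancels exactly as in the edge proof, leaving the two ``arms'' below the meeting point with equal multisets.

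The essential new difficulty, absent in the edge case, is parity. An edge-anagram needs a path of even \emph{length}, and leaf-to-leaf paths in a complete binary tree have even length, which is precisely why Theorem~\ref{thm:noDegreeBoundEdge} goes through on the tree itself; a vertex-anagram instead needs a path of even \emph{order}. In a complete binary tree the two root-to-leaf paths overlap in the least common ancestor $v$, so the $pq$-path has the form $A_p,\,v,\,A_q$ with $M(A_p)=M(A_q)$ but with the single vertex $v$ marooned in the middle, giving odd order. Thus the naive translation of the edge argument produces $M(A_p)=M(A_q)$ and yet no anagram. This is exactly the obstruction that forces a genuinely new construction here, and it is why the degree bound $4$ came for free from the line graph while degree $3$ does not.

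To defeat the parity obstruction I would use that Theorem~\ref{thm:noDegreeBound} only asks for a graph of maximum degree $3$, not a complete binary tree: I would arrange for the two matched arms to abut across a single \emph{edge} rather than meet at a shared vertex, so that their concatenation is a genuine path of even order that splits exactly into the two equal-multiset halves. Concretely one replaces the apex vertex by an edge $v_1v_2$ (hanging the left subtree on $v_1$ and the right on $v_2$), or introduces an asymmetric subdivision so that the matched leaves sit at adjacent depths; in either case a collision of arm-multisets becomes a bona fide anagram whose midpoint lies on the distinguished edge.

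The main obstacle is then not the counting but ensuring that the collision \emph{forced} by pigeonhole lands on a \emph{compatible} pair, namely one whose connecting path has the required even order and splits at the distinguished edge. A colouring is free to repeat a multiset on two leaves whose connecting path has odd order, since such a repetition creates no anagram, so a careless construction lets the adversary segregate the multisets and waste every collision on an incompatible same-side pair. The heart of the argument is therefore to build $G_h$ so that this segregation is impossible: every equal-multiset pair, or at least one pigeonhole-forced pair, must yield an even, correctly-splitting path. I expect this to require a layered or recursive variant of the binary tree, again of maximum degree $3$, in which compatibility is guaranteed at every scale; once that is in place, letting $h \to \infty$ beats any fixed $c$ and shows that $\phi$ is unbounded on graphs of maximum degree $3$.
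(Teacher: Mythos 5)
You have the paper's strategy exactly right up to the decisive step: count leaves against colour multisets, pigeonhole two leaves $p,q$ with equal root-path multisets, cancel the shared segment, and you correctly single out the parity obstruction (the least common ancestor strands an odd middle vertex) as the genuinely new difficulty, together with the right idea for defeating it --- make the two matched arms abut across an edge rather than meet at a shared vertex. All of this agrees with the paper's proof. But the proposal stops precisely where the proof has to do its real work: you never construct $G_h$. Your two concrete suggestions do not close the gap. Replacing the apex by an edge $v_1v_2$ only exploits collisions whose least common ancestor is the apex, and the counting argument cannot force such a collision: a colouring may keep the multiset families of the two subtrees disjoint and spend every collision on a same-side pair, which is harmless since same-side paths have odd order. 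Placing matched leaves at adjacent depths is self-defeating, because equal multisets must have equal size, so leaves at different depths never collide under your pigeonhole in the first place. You acknowledge the remaining difficulty (``compatibility guaranteed at every scale'') and then defer it to a construction you expect to exist; that construction is the theorem.

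The paper fills this gap with two intertwined ideas, and your sketch silently breaks on the second. First, the bypass edge is added at \emph{every} branching point: $G$ is a rooted tree $T$ plus an edge between every pair of vertices sharing a parent. Then whatever the least common ancestor $v$ of a colliding pair is, the two children of $v$ on the arms are adjacent in $G$, so $G[V(P')\cup V(Q')]$ is a path of even order splitting exactly into the two equal-multiset arms; every collision is automatically compatible and no segregation argument is needed. Second --- and this is the point your proposal never confronts --- adding sibling edges to a complete binary tree raises the maximum degree to $4$ (parent, two children, sibling), which loses the theorem entirely, since degree $4$ was already known from the line-graph corollary of Theorem \ref{thm:noDegreeBoundEdge}. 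The paper's tree therefore alternates levels: vertices of even depth have two children (and, having a non-branching parent, no sibling), while non-leaf vertices of odd depth have one child (and a sibling), so every vertex of $G$ has degree at most $3$. The leaf count drops only to $2^{(h+1)/2}$, still exponential, so choosing $h$ odd with $2^{(h+1)/2} > (h+2)^c$ keeps the pigeonhole intact. Without this alternating construction, or an equivalent device reconciling the bypass edges with the degree bound, your argument does not yet prove the statement.
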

	\addtocounter{theorem}{-1}
	\endgroup
	
	\begin{proof}
	
		Let $c\geq 1$ and let $h \in \mathbb{Z}^+$ be odd such that $2^{(h + 1)/2} > (h + 2)^c$. Let $T$ be the rooted tree, with root $r$, such that:
		\begin{itemize}
			\item vertices of depth $h$ are leaves,
			\item vertices of even depth have two children,
			\item non-leaf vertices of odd depth have one child,
		\end{itemize}
		where the \textit{depth} of a vertex is its distance from the root.
		Let $G$ be the graph obtained from $T$ by adding an edge between every pair of vertices in $T$ that share a parent, as illustrated in Figure \ref{fig:noDegreeVertex}. 
	
		\begin{figure}[h]
			\begin{center}
				\begin{tikzpicture}
					[line width=1.4pt,vertex/.style={circle,inner sep=0pt,minimum size=0.3cm}, scale = 0.085]
					
					\node[draw = black, fill = green] (w) at     ($(0,0)$) [vertex] {};
					
					\node[draw = black, fill = red  ] (v1) at    ($(20,  -8)$) [vertex] {};
					\node[draw = black, fill = blue ] (v0) at    ($(-20, -8)$) [vertex] {};
					
					\node[draw = black, fill = red ] (w1) at    ($(20,  -16)$) [vertex] {};
					\node[draw = black, fill = green] (w0) at    ($(-20, -16)$) [vertex] {};
					
					\node[draw = black, fill = blue ] (v11) at   ($(30,  -24)$) [vertex] {};
					\node[draw = black, fill = green] (v10) at   ($(10,  -24)$) [vertex] {};
					\node[draw = black, fill = red  ] (v01) at   ($(-10, -24)$) [vertex] {};
					\node[draw = black, fill = blue ] (v00) at   ($(-30, -24)$) [vertex] {};
					
					\node[draw = black, fill = green] (w11) at   ($(30,  -32)$) [vertex] {};
					\node[draw = black, fill = blue ] (w10) at   ($(10,  -32)$) [vertex] {};
					\node[draw = black, fill = blue ] (w01) at   ($(-10, -32)$) [vertex] {};
					\node[draw = black, fill = red  ] (w00) at   ($(-30, -32)$) [vertex] {};
					
					\node[draw = black, fill = blue ] (v111) at  ($(35,  -40)$) [vertex] {};
					\node[draw = black, fill = red  ] (v110) at  ($(25,  -40)$) [vertex] {};
					\node[draw = black, fill = red  ] (v101) at  ($(15,  -40)$) [vertex] {};
					\node[draw = black, fill = green] (v100) at  ($(5,   -40)$) [vertex] {};
					\node[draw = black, fill = blue ] (v011) at  ($(-5,  -40)$) [vertex] {};
					\node[draw = black, fill = red  ] (v010) at  ($(-15, -40)$) [vertex] {};
					\node[draw = black, fill = blue ] (v001) at  ($(-25, -40)$) [vertex] {};
					\node[draw = black, fill = green] (v000) at  ($(-35, -40)$) [vertex] {};
					
					\node[draw = white, fill = white  ] (p) at ($(-15, -44)$) [vertex] {$p$};
					\node[draw = white, fill = white  ] (q) at ($(35,  -44)$) [vertex] {$q$};
					
					\draw[draw=yellow, line width = 5pt](w01)--(v010);
					\draw[draw=yellow, line width = 5pt](v01)--(w01);
					\draw[draw=yellow, line width = 5pt](w0)--(v01);
					\draw[draw=yellow, line width = 5pt](v0)--(w0);
					\draw[draw=yellow, line width = 5pt](v1)--(v0);
					\draw[draw=yellow, line width = 5pt](v1)--(w1);
					\draw[draw=yellow, line width = 5pt](w1)--(v11);
					\draw[draw=yellow, line width = 5pt](v11)--(w11);
					\draw[draw=yellow, line width = 5pt](w11)--(v111);
					
					\draw[draw=black](w)--(v1);
					\draw[draw=black](w)--(v0);
					
					\draw[draw=black](v1)--(v0);
					
					\draw[draw=black](v1)--(w1);
					\draw[draw=black](v0)--(w0);
					
					\draw[draw=black](w1)--(v11);
					\draw[draw=black](w1)--(v10);
					\draw[draw=black](w0)--(v01);
					\draw[draw=black](w0)--(v00);
					
					\draw[draw=black](v01)--(v00);
					\draw[draw=black](v11)--(v10);
					
					\draw[draw=black](v11)--(w11);
					\draw[draw=black](v11)--(w11);
					\draw[draw=black](v10)--(w10);
					\draw[draw=black](v10)--(w10);
					\draw[draw=black](v01)--(w01);
					\draw[draw=black](v01)--(w01);
					\draw[draw=black](v00)--(w00);
					\draw[draw=black](v00)--(w00);
					
					\draw[draw=black](v101)--(v100);
					\draw[draw=black](v111)--(v110);
					\draw[draw=black](v001)--(v000);
					\draw[draw=black](v011)--(v010);
					
					\draw[draw=black](w11)--(v111);
					\draw[draw=black](w11)--(v110);
					\draw[draw=black](w10)--(v101);
					\draw[draw=black](w10)--(v100);
					\draw[draw=black](w01)--(v011);
					\draw[draw=black](w01)--(v010);
					\draw[draw=black](w00)--(v001);
					\draw[draw=black](w00)--(v000);
				\end{tikzpicture}
			\end{center}
			\caption{The graph $G$ with $h = 5$, a vertex $3$-colouring. Vertces $p$ and $q$ have the same colour multiset so the $pq$-path is an anagram.
			}
			\label{fig:noDegreeVertex}
		\end{figure}
		Fix an arbitrary $c$-colouring of $G$. We now show that $G$ contains an anagram. By Equation (\ref{eqn:colourMult}) and our choice of $h$,
		\begin{align*}
			\#\text{leaves of $T$} = 2^{(h+1)/2} > (h+2)^c \geq |\mathcal{M}_{h+1, c}|.
		\end{align*}
		So, because each root-to-leaf path in $T$ has $h+1$ vertices, the number of leaves in $T$ is greater than the number of distinct colour multisets on root-to-leaf paths in $T$. Therefore there are two leaves, $p$ and $q$ in $T$, such that $M(P)=M(Q)$ where $P$ and $Q$ are the $rp$-path in $T$ and $rq$-path in $T$ respectively. Split the two paths into three vertex-disjoint paths $R = P \cap Q$, $P' = P - V(R)$ and $Q' = Q - V(R)$. Note that
		\begin{align*}
			M(P) = M(P') \cup M(R) \text{ and } M(Q) = M(Q') \cup M(R).
		\end{align*}
		Thus $M(P') = M(Q')$. By construction, $G\left[V(P') \cup V(Q')\right]$ is a path. Therefore $G$ contains an anagram.
	\end{proof}
	Theorem \ref{thm:noDegreeBound} only proves that $\phi$ is unbounded on graphs of maximum degree $3$ without further reference to their structure. We prove below that $\phi$ is unbounded on trees. A particularly interesting question is whether there is a result analogous to Theorem \ref{thm:noDegreeBoundEdge}: is $\phi$ bounded on trees of maximum degree $3$? This motivates further investigation of $\phi$ on trees. 
	
	\subsection{Vertex colouring trees}\label{sec:lowerTree}
	We now prove lower bounds for anagram-free vertex colourings of trees. These bounds follow from the following theorem. The \textit{complete $d$-ary tree of height $h$} is the rooted tree such that every internal vertex has $d$ children and all leaves are of distance $h$ from the root.
			
	\begin{theorem}\label{thm:treeCompleteDary}
		The complete $d$-ary tree of height $h$ does not have an anagram-free $c$-colouring when $d^c \leq (d/c)^h$.
	\end{theorem}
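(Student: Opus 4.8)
The plan is to refine the counting method of Theorems~\ref{thm:noDegreeBoundEdge} and~\ref{thm:noDegreeBound}, but to work with colour \emph{sequences} rather than colour multisets; this is what lets the branching of the tree be traded against the colours to produce the factor $(d/c)^h$. Suppose for contradiction that $T$, the complete $d$-ary tree of height $h$, has an anagram-free $c$-colouring while $d^c \leq (d/c)^h$. The tree has $d^h$ leaves, and each root-to-leaf path carries a colour sequence which, since all leaves share the root's colour, is determined by the $h$ colours strictly below the root; there are at most $c^h$ such sequences. By pigeonhole some class $C$ of at least $d^h/c^h = (d/c)^h$ leaves shares one common colour sequence $t_0 t_1 \cdots t_h$, where $t_i$ is the colour at depth $i$.

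The hard part will be the parity obstruction, which is the feature that a straddling subpath must exploit. For two leaves $p,q \in C$ with least common ancestor $v$ at depth $j$, the $pq$-path passes through $v$ and has $2(h-j)+1$ vertices, an odd number, so it is never itself an anagram; and unlike in Theorem~\ref{thm:noDegreeBound} there are no sibling edges available to delete $v$ and restore a path. Instead I would look for an even subpath centred on $v$. Call a depth $j$ \emph{good} if the colour $t_j$ recurs at some deeper position $j+s$ of the common sequence. If the LCA depth $j$ is good, take the subpath running from the depth-$(j+s-1)$ ancestor of $p$ up to $v$ and then down to the depth-$(j+s)$ ancestor of $q$; this is a genuine path on $2s$ vertices (valid since $j+s \leq h$, so both endpoints lie on the respective branches), and its two halves have colour multisets $\{t_j,\dots,t_{j+s-1}\}$ and $\{t_{j+1},\dots,t_{j+s}\}$. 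These are equal precisely because $t_j = t_{j+s}$, so the subpath is an anagram. Working with identical sequences rather than merely identical multisets is essential here, since it pins down the colours along both descending branches and lets the shared block $\{t_{j+1},\dots,t_{j+s-1}\}$ cancel.

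It then remains to force a good LCA depth. A depth $j$ is \emph{bad} only when $t_j$ is the final occurrence of its colour in $t_0 \cdots t_h$, so there are at most $c$ bad depths; moreover depth $h$ is always bad yet is never an LCA depth, leaving at most $c-1$ bad depths usable as LCA depths. If every pair of leaves in $C$ had its LCA at a bad depth, then the minimal subtree of $T$ containing the root and all of $C$ would branch only at these at most $c-1$ depths. Mapping each leaf of $C$ to the child it selects at each such depth is then injective, since two distinct leaves first separate at their LCA, which is one of the recorded bad depths; hence $|C| \leq d^{c-1}$. This contradicts $|C| \geq (d/c)^h \geq d^c > d^{c-1}$, using $d > c \geq 1$ so $d \geq 2$. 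Therefore some pair in $C$ has a good LCA depth and $T$ contains an anagram. I expect the only delicate points to be verifying the injectivity of this branch-encoding and confirming the index bookkeeping in the straddling subpath; both are routine once the good/bad dichotomy is set up.
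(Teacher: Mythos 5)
Your proof is correct, and while it shares its opening and closing moves with the paper's proof, the combinatorial core is organised differently. Both arguments begin identically: pigeonhole on root-to-leaf colour \emph{sequences} gives a class $C$ of at least $(d/c)^h$ leaves whose spanned subtree is coloured by level, and both end with the same anagram, namely an even path straddling a branch vertex, offset by one vertex so that a repeated colour cancels. The difference is how the ``repeated colour above a branching'' configuration is found. The paper calls a \emph{level} of the spanned subtree good when some vertex there does not have exactly one child, proves by a growth argument ($|l_h| \leq d^{g-1}$ combined with $|l_h| \geq (d/c)^h$ and the hypothesis) that there are at least $c+1$ good levels, and then pigeonholes on colours to obtain \emph{two} same-coloured branching levels; the anagram runs between them. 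You instead call a \emph{depth} bad when its colour appears there for the last time, note that at most $c-1$ bad depths can serve as LCA depths, and use the injective branch-encoding to show that if every LCA of $C$ sat at a bad depth then $|C| \leq d^{c-1} < d^c$, a contradiction; the recurrence of the colour at a good LCA depth then supplies the anagram directly. Your version needs branching at only one depth (the matching colour comes for free from recurrence, and the lower end of your anagram need not lie at a branching depth), whereas the paper needs two same-coloured branching levels; both rest on the same quantitative facts, $(d/c)^h \geq d^c$ and ``a subtree branching at only $m$ depths has at most $d^m$ leaves''. One small point to tighten: you assert $d > c \geq 1$ without justification. It does follow from the hypothesis once $d \geq 2$, since then $(d/c)^h \geq d^c \geq 2$ forces $d/c > 1$, and the case $d = 1$ is degenerate and should be disposed of separately; this is a wrinkle the paper's proof shares, as it likewise concludes $c+1 \leq g$ from $d^{c+1} \leq d^g$, which also presupposes $d \geq 2$.
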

	\begin{proof}
		Let $T$ be the complete $d$-ary tree of height $h$ with root $r$. Let $L$ be the leaves of $T$ and fix an arbitrary $c$-colouring of $T$.
		
		For each $v \in L$ let $S_v$ be the sequence of colours on the $rv$-path. There are at most $c^h$ such colour sequences since each has length $h+1$ and they share an initial colour (the colour of the root). Since $|L| = d^h$ there is a set $C \subseteq L$ of size at least $d^h / c^h$ such that $S_v = S_w$ for all $v,w \in C$. Thus $C$ is a large set of leaves that have equal colour sequence on each root-to-leaf path. As illistrated in Figure \ref{fig:completeDaryTree}.
		
		Let $R$ be the subtree of $T$ induced by the set of all ancestors of leaves in $C$. The remainder of the proof is concerned with finding an anagram in $R$. Observe that if two vertices of $R$ have the same depth, then they also have the same colour. Define a \textit{level} as a maximal set of vertices in $R$ that all have equal depth. $R$ is coloured by level. Denote the levels $l_0, l_1, \ldots, l_{h}$ where $l_0 = \{r\}$.
			
		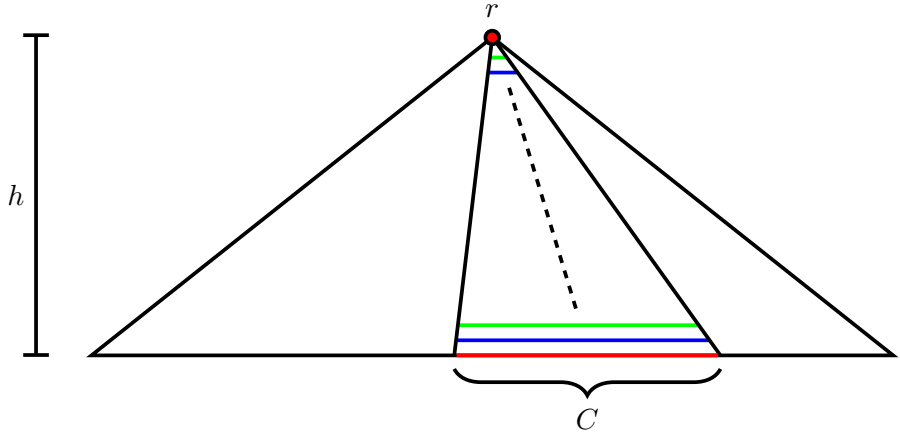
\begin{figure}[h]
			\begin{center}
				\begin{tikzpicture}[
						line width = 1.4pt, 
						vertex/.style={circle,inner sep=0pt,minimum size=0.5cm}, 
						sVertex/.style={circle,inner sep=0pt,minimum size=0.18cm}, 
						point/.style={circle,inner sep=0pt,minimum size=0cm}, 
						scale = 0.1
					]

					\draw [|-|, left] ($(-60, -18.4)$) -- ($(-60, 24.5)$) node[midway, color=black] {$h$};
					
					\node (tri) [isosceles triangle, draw, isosceles triangle stretches, minimum width = 300, minimum height = 120, shape border rotate=90] at ($(0, 0)$) {};
					
					\draw [color=green] ($(2, 21.3)$) -- ($(-0.2, 21.3)$) {};
					\draw [color=blue ] ($(3.3, 19.3)$) -- ($(-0.4, 19.3)$) {};
					
					\draw [color=blue ] ($(28.6, -16.2)$) -- ($(-4.8, -16.2)$) {};
					\draw [color=green] ($(27, -14.2)$) -- ($(-4.6, -14.2)$) {};
					
					\draw [] ($(-5, -18.2)$) -- ($(0, 24)$) node[midway, color=black] {};
					\draw [] ($(30, -18.2)$) -- ($(0, 24)$) node[midway, color=black] {};
					
					\draw [dashed] ($(11, -12)$) -- ($(2, 18)$) node[midway, color=black] {};
					
					\node[draw = black, fill = red, label = {[label distance=0pt]90:$r$}] (r) at ($(0, 23.95)$) [sVertex] {};
				
					\draw [color=red  ] ($(29.7, -18.2)$) -- ($(-4.7, -18.2)$) {};
					
					\draw [decorate,decoration={brace,amplitude=10pt},xshift=0pt,yshift=0pt] ($(30, -20)$) -- ($(-5, -20)$) node[midway, color=black, yshift= -19pt] {$C$};
					
				\end{tikzpicture}
			\end{center}
			\caption{The complete $d$-ary tree of height $h$ with a large set $C$ of leaves which have the same root-to-leaf path colour sequence.}
			\label{fig:completeDaryTree}
		\end{figure}
		
		A level of $R$ is \textit{bad} if every vertex in the level has exactly one child in $R$. A level is \textit{good} if it is not bad. Note that only level $l_h$ contains vertices with no children. Let $g$ be the number of good levels and $b$ be the number of bad levels. Then $h+1 = g+b$. We now prove that there are at least $c+1$ good levels and so at least two good levels share a colour.
		
		We bound the number of bad levels by considering the number of good levels required to obtain at least $(d/c)^h$ leaves. If $l_i$ is bad then $|l_i| = |l_{i+1}|$ and if $l_i$ is good then $|l_i| < |l_{i+1}| \leq d|l_i|$. 
		\begin{align*}
		|l_i| \leq d^\text{\#preceding good levels}
		\end{align*}
		since $l_h$ is the final good level, it is preceded by $g-1$ good levels. Thus
		\begin{align*}
		(d/c)^h \leq|l_h| \leq d^{g-1}
		\end{align*}
		By assumption, $d^{c+1} \leq d^{h+1}c^{-h}$ which is at most $d^g$. Thus $c+1 \leq g$, there are two good levels with the same colour.
		
		Let $l_a$ and $l_b$ be two good levels that have the same colour and, without loss of generality, let $a < b$. Let $v \in l_a$ be a vertex with at least two children. All vertices in the levels between $l_a$ and $l_b$ have at least one child so there are two vertices $u, w \in B$ such that $v$ is their least common ancestor. 
		
		Let $p_0, p_1, \ldots, p_n$ be the $uv$-path and $q_0, q_1, \ldots, q_n$ be the $wv$-path. Since  $R$ is coloured by level,
		\begin{align*}
		M(p_0, p_1, \ldots, p_n)=M(q_0, q_1, \ldots, q_n).
		\end{align*}
		Vertices $p_0$ and $q_n$ are in levels $l_a$ and $l_b$ respectively so they have the same colour. Thus
		\begin{align*}
		M(p_1, p_2, \ldots, p_n)=M(q_0, q_1, \ldots, q_{n-1}).
		\end{align*}
		Therefore $p_0, p_1, \ldots, p_{n-1}, p_n, q_{n-1}, \ldots, q_1$ is an anagram.
	\end{proof}
	Theorem \ref{thm:treeCompleteDary} implies Theorem \ref{thm:treeDegreeBound}, that $\phi$ is unbounded on trees, but it has uses beyond this result because it gives us control over the radius and maximum degree of the resulting trees. In particular, it implies the following two theorems each of which correspond to an extreme of radius or maximum degree.
	
	\begin{theorem}\label{thm:treeLowerHeight}
		For every integer $h \geq 2$ the complete $(h - 1)^h$-ary tree of height $h$ has $\phi(T) \geq h$.
	\end{theorem}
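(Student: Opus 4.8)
The plan is to obtain this statement as an immediate corollary of Theorem \ref{thm:treeCompleteDary}. To prove $\phi(T) \geq h$ for the complete $(h-1)^h$-ary tree $T$ of height $h$, it suffices to show that $T$ admits no anagram-free colouring with $c := h-1$ colours, since then every anagram-free colouring uses at least $h$ colours. Writing $d := (h-1)^h$ for the arity, Theorem \ref{thm:treeCompleteDary} delivers exactly this non-colourability provided the single numerical hypothesis $d^c \leq (d/c)^h$ holds for these chosen values of $c$ and $d$. So the whole argument reduces to verifying that one inequality.

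The key step is therefore to substitute $c = h-1$ and $d = (h-1)^h$ and rewrite both sides as powers of $(h-1)$. On the right-hand side, $d/c = (h-1)^h/(h-1) = (h-1)^{h-1}$, so
\begin{align*}
(d/c)^h = (h-1)^{(h-1)h} = (h-1)^{h^2 - h}.
\end{align*}
On the left-hand side,
\begin{align*}
d^c = \lr{(h-1)^h}^{h-1} = (h-1)^{h(h-1)} = (h-1)^{h^2 - h}.
\end{align*}
Hence $d^c = (d/c)^h$, so the hypothesis of Theorem \ref{thm:treeCompleteDary} is satisfied (in fact with equality), and the bound $\phi(T) \geq h$ follows at once. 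The construction was evidently engineered so that the two exponents coincide exactly, which is why equality, rather than a strict inequality, is the natural outcome.

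The one point that needs care is the degenerate endpoint $h = 2$, where $c = h-1 = 1$ and $d = (h-1)^h = 1$: here the ``complete $1$-ary tree of height $2$'' is just a path on three vertices, and the step in the proof of Theorem \ref{thm:treeCompleteDary} that concludes $g \geq c+1$ from $d^{c+1} \leq d^g$ is vacuous when $d = 1$. For this case I would instead argue directly that $\phi(T) \geq 2$, which is immediate: a single-colour colouring makes the first two vertices of the path an anagram. For all $h \geq 3$ we have $d = (h-1)^h \geq 8$ and $c = h-1 \geq 2$, so the parameters are genuinely nondegenerate and Theorem \ref{thm:treeCompleteDary} applies without modification. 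I do not expect any substantive obstacle, as the real content already resides in Theorem \ref{thm:treeCompleteDary}; this result is a direct specialisation of its parameters.
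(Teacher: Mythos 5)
Your proposal is correct and takes essentially the same route as the paper: the paper likewise sets $c := h-1$ and $d := c^{c+1} = (h-1)^h$, verifies that $d^c = (d/c)^h$ holds with equality, and invokes Theorem \ref{thm:treeCompleteDary}. Your separate treatment of the degenerate case $h=2$ (where $d=1$ and the step deducing $c+1 \leq g$ from $d^{c+1} \leq d^g$ inside the proof of Theorem \ref{thm:treeCompleteDary} genuinely fails) is a point of care that the paper's own proof silently skips.
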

	\begin{proof}
		Let $h := c + 1$ and $d := c^{c + 1}$. The conditions of Theorem \ref{thm:treeCompleteDary} are satisfied because
		\begin{align*}
		d^c = c^{c(c+1)} = \lr{c^{c(c+1)}c^{c+1}}c^{-(c+1)} = d^{c+1}c^{-(c+1)} = (d/c)^h.
		\end{align*}
		Therefore the complete $c^{c + 1}$-ary tree of height $c+1$ does not have an anagram-free $c$-colouring.
	\end{proof}
	\begin{theorem}\label{thm:treeLowerDegree}
		For every integer $d' \geq 1$ there exists a tree $T$ with maximum degree $d'$ such that $\phi(T) \geq d' - 1$.
	\end{theorem}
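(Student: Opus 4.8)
The plan is to derive this as a direct corollary of Theorem~\ref{thm:treeCompleteDary}, mirroring the way Theorem~\ref{thm:treeLowerHeight} is obtained but tuning the parameters to control maximum degree rather than radius. The key observation is that a complete $d$-ary tree of height $h \geq 2$ has maximum degree exactly $d+1$: each internal non-root vertex has $d$ children together with one parent, whereas the root has degree $d$ and the leaves have degree $1$. Hence, to realise maximum degree $d'$, I would set $d := d' - 1$ (and insist $h \geq 2$ so that the degree $d+1 = d'$ is actually attained, not merely an upper bound).

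To force $\phi(T) \geq d' - 1$ it suffices to rule out anagram-free colourings that use $c := d' - 2$ colours. With this choice of $d$ and $c$, the hypothesis $d^c \leq (d/c)^h$ of Theorem~\ref{thm:treeCompleteDary} reads
\[
(d'-1)^{d'-2} \leq \left(\frac{d'-1}{d'-2}\right)^{h}.
\]
For $d' \geq 3$ we have $d' - 1 > d' - 2 \geq 1$, so the base $\tfrac{d'-1}{d'-2}$ exceeds $1$ and the right-hand side grows without bound in $h$, while the left-hand side is a fixed constant once $d'$ is chosen. I would therefore fix any integer $h \geq 2$ large enough that the inequality holds. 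Theorem~\ref{thm:treeCompleteDary} then shows that the complete $(d'-1)$-ary tree of height $h$ admits no anagram-free $(d'-2)$-colouring, which is precisely $\phi(T) \geq d' - 1$; simultaneously, $h \geq 2$ guarantees the tree has maximum degree exactly $d'$.

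The cases $d' \in \{1,2\}$ lie outside this argument, since $c = d' - 2$ is then not a positive integer, but they are trivial: any tree with at least one edge satisfies $\phi \geq 1 \geq d'-1$ in this range. I do not anticipate a genuine obstacle; the whole content is a parameter substitution into Theorem~\ref{thm:treeCompleteDary}. The only points requiring care are confirming that the constraint $h \geq 2$ (needed so the maximum degree is attained) is compatible with the colouring inequality, and disposing of the small-$d'$ boundary cases separately.
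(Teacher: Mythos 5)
Your proposal is correct and follows essentially the same route as the paper: both apply Theorem~\ref{thm:treeCompleteDary} with $d = c+1$ (equivalently $d = d'-1$, $c = d'-2$), choosing $h$ large enough that $d^c \leq (d/c)^h = (1+\tfrac{1}{c})^h$, and then read off maximum degree $d+1 = d'$ and $\phi(T) \geq c+1 = d'-1$. Your treatment is in fact slightly more careful than the paper's, since you explicitly note that $h \geq 2$ is needed for the degree $d'$ to be attained and you dispose of the boundary cases $d' \in \{1,2\}$, which the paper's proof leaves implicit.
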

	\begin{proof}
		Fix $c$ and choose $h$ so that
		\begin{align*}
		(c+1)^c \leq (1+ \frac{1}{c})^h = (\frac{c+1}{c})^h.
		\end{align*}
		Thus Theorem \ref{thm:treeCompleteDary} is satisfied with $d = c+1$. Therefore the complete $(c+1)$-ary tree of height $h$ does not have an anagram-free $c$-colouring. This tree has maximum degree $d' = c+2$ and so $\phi(T) \geq c + 1 = d' - 1$.
	\end{proof}
	
	\section{Upper bounds for $\phi$ on trees}\label{sec:upperTree}
	In this section we complement the results of the previous section with some upper bounds for $\phi$ on trees. Our first bound comes from centred colouring. A vertex colouring of a graph $G$ is \textit{centred} if every subtree $T$ of $G$ contains a vertex whose colour appears exactly once in $T$. Centred colourings are anagram-free since every anagram contains an even number of occurrences of each colour and in a centred colouring every path contains a colour that occurs exactly once. Therefore, for every graph $G$, the centred chromatic number of $G$ is an upper bound on $\phi(G)$. It is easily seen that every tree $T$ of radius $h$ has centred chromatic number at most $h+1$, see \cite[Section 6.5]{book:sparsity}, thus $\phi(T) \leq h + 1$. This bound completes the first half of Theorem \ref{thm:treeHeightBound} and is attained by colouring each vertex by its distance from a centre of $T$. The second half of Theorem \ref{thm:treeHeightBound} follows from Theorem \ref{thm:treeLowerHeight} and an inspection of the trees of radius $0$ or $1$.
	
	We now work towards proving Theorem \ref{thm:treePathwidthBound}, which bounds $\phi$ by a function of pathwidth. While pathwidth is defined in terms of path decompositions, the only property we require is the following lemma.
	\begin{lemma}[\cite{suderman2004pathwidth} Lemma 5]\label{lem:mainPath}
		Every tree $T$ with at least one edge contains a path $P$ such $\pw(T - V(P)) \leq \pw(T)-1$.
	\end{lemma}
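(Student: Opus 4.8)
The plan is to reduce the statement to a single clean observation about an optimal path decomposition: I will produce a path $P$ such that \emph{every} bag of the decomposition contains at least one vertex of $P$. Deleting $V(P)$ from every bag then removes at least one vertex per bag, dropping the width by one. The only background facts I would invoke are that $\pw(T)$ equals the minimum width of a path decomposition, that for a fixed vertex $v$ the set of bags containing $v$ is an interval, and that restricting every bag of a path decomposition of $T$ to an induced subgraph $T[V']$ yields a path decomposition of $T[V']$ of no larger width.

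First I would fix an optimal path decomposition $(B_1,\ldots,B_m)$ of $T$ of width $p := \pw(T)$, discarding empty bags so that $B_1$ and $B_m$ are nonempty. For each vertex $v$ write $I(v) = \{\, i : v \in B_i \,\}$, which is an interval of $[m]$. I would then pick any $a \in B_1$ and any $b \in B_m$ and let $P = p_0 p_1 \cdots p_\ell$ be the unique $a$--$b$ path in $T$; note $P$ degenerates to a single vertex when $a = b$, which is still a path.

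The crux is the claim that every bag meets $V(P)$, equivalently $\bigcup_{v \in V(P)} I(v) = [m]$. This holds because $1 \in I(a)$ and $m \in I(b)$, while each edge $p_j p_{j+1}$ of $P$ forces $I(p_j) \cap I(p_{j+1}) \neq \emptyset$, since some bag must contain both endpoints of that edge. Hence $I(p_0),\ldots,I(p_\ell)$ is a chain of consecutively overlapping intervals, so their union is a single interval; as it contains both $1$ and $m$ it must equal $[m]$. Consequently $|B_i \cap V(P)| \geq 1$ for every $i$.

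To finish, set $V' = V(T) \setminus V(P)$. Since each bag loses at least one vertex, $|B_i \cap V'| \leq (p+1) - 1 = p$ for all $i$, so $(B_i \cap V')_i$ is a path decomposition of $T - V(P) = T[V']$ of width at most $p-1$, giving $\pw(T - V(P)) \leq \pw(T) - 1$. The main obstacle is choosing the right path: taking its endpoints in the two extreme bags is precisely what makes the interval-covering claim go through, and everything else is routine. The degenerate cases (an empty remaining forest, or $P$ a single vertex) are harmless because the conclusion is an inequality.
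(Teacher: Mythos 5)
Your proof is correct, but there is nothing in the paper to compare it against: the paper does not prove this lemma, it simply imports it as Lemma 5 of Suderman \cite{suderman2004pathwidth}. Your argument is the standard self-contained one, and every step checks out: fixing an optimal decomposition $(B_1,\ldots,B_m)$ of width $p$, choosing $a\in B_1$, $b\in B_m$ and the $a$--$b$ path $P$, the edge-coverage condition forces consecutive traces $I(p_j)$, $I(p_{j+1})$ to overlap, so their union is an interval containing $1$ and $m$, hence all of $[m]$; restricting each bag to $V(T)\setminus V(P)$ then removes at least one vertex per bag and yields a valid path decomposition of $T-V(P)$ (vertex coverage, edge coverage and the interval property all survive restriction, as does discarding empty bags). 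Two remarks. First, your proof is actually more general than the stated lemma: acyclicity is never used, only connectivity (to guarantee that the $a$--$b$ path exists), so you have shown that every connected graph $G$ contains a path $P$ with $\pw(G-V(P))\leq\pw(G)-1$; Suderman's treatment is specific to trees, where such a path can also be exhibited constructively (a ``main path'' of the tree). Second, the degenerate cases you flag are indeed harmless: $a=b$ gives a one-vertex path whose interval $I(a)$ already covers $[m]$, and if $T-V(P)$ is empty the inequality holds because $T$ has an edge and hence $\pw(T)\geq 1$.
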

	We also require two trivial properties of pathwidth. The first is that edgeless graphs have pathwidth $0$ and the second is that the pathwidth of a disconnected graph is the maximum pathwidth over each of its components.
	\begin{theorem}\label{thm:colourPathwidth}
		Every tree of pathwidth $m$ has an anagram-free vertex $(4m+1)$-colouring.
	\end{theorem}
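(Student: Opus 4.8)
The plan is to induct on the pathwidth $m = \pw(T)$, peeling off one path at a time with Lemma \ref{lem:mainPath} and colouring that path with a fresh palette of four colours supplied by the bound $\phi(P) \leq 4$ for paths. For the base case $m = 0$, a tree of pathwidth $0$ is edgeless and, being connected, is a single vertex; one colour $= 4\cdot 0 + 1$ suffices. For the inductive step I would assume every tree of pathwidth at most $m-1$ admits an anagram-free $(4m-3)$-colouring, and let $T$ have pathwidth $m \geq 1$. Then $T$ has an edge, so Lemma \ref{lem:mainPath} yields a path $P$ with $\pw(T - V(P)) \leq m-1$. Each component of the forest $T - V(P)$ is a tree of pathwidth at most $m-1$, so by induction it has an anagram-free colouring using a common palette $B$ of $4m - 3$ colours (reusing the palette across components is harmless). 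I would colour $P$ with a disjoint four-colour palette $A$ via an anagram-free colouring of the path, for a total of $4 + (4m-3) = 4m+1$ colours.

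The work is then to verify that the combined colouring is anagram-free, and for this I would first record two structural facts. Since $T$ is a tree, for any path $Q$ the set $V(Q) \cap V(P)$ is contiguous along $Q$: if $q_s$ and $q_t$ both lie on $P$, then the unique $q_sq_t$-path lies in $P$, so every vertex of $Q$ between them also lies on $P$. Hence every path $Q$ of $T$ decomposes as a forest-prefix, a (possibly empty) contiguous block $S$ of $P$-vertices, and a forest-suffix, with prefix and suffix inside $T - V(P)$. Second, equal multisets have equal size, so if the two halves of an anagram have the same colour multiset then they have equal length.

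Now, given a path $Q$ that is an anagram (necessarily of even order), I would split into cases by how $Q$ meets $P$. If $Q$ avoids $P$ it lies in a single forest component and is not an anagram by induction; if $Q$ lies in $P$ it is not an anagram by the choice of that colouring. The main case is when $Q$ meets $P$ in a nonempty block $S$. Here I would exploit that $A$ and $B$ are disjoint: the two halves of $Q$ have equal colour multisets, so their sub-multisets of $A$-colours agree, and since the $A$-colours of $Q$ occur exactly on $S$, the midpoint of $Q$ must split $S$ evenly. If the midpoint fell in a forest part, one half would contain all of the nonempty $S$ and the other none, contradicting the balance of $A$-colours; so the midpoint lies strictly inside $S$, cutting it into adjacent nonempty pieces $S_1, S_2$ with $M(S_1) = M(S_2)$. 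By the size remark $|S_1| = |S_2|$, so $S = S_1S_2$ is an anagram, contradicting that $P$ is coloured anagram-free. This contradiction would complete the induction.

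I expect the crossing case to be the crux: ruling out anagrams $Q$ that use vertices of both $P$ and the forest. The disjointness of the two palettes is exactly what makes it tractable, since it lets me analyse the $A$-colours and the $B$-colours independently and thereby reduce a crossing anagram to an anagram sitting inside the anagram-free path $P$. Notably, the forest-prefix and forest-suffix need not be anagram-free or lie in the same component for this argument, so the reuse of the palette $B$ across components causes no difficulty.
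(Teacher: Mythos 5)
Your proposal is correct and follows essentially the same route as the paper: induction on pathwidth, peeling off a path via Lemma \ref{lem:mainPath}, colouring the forest components by induction with a shared palette and the path with four fresh colours from Ker\"anen's result. The only difference is that you spell out the final verification (contiguity of $Q \cap P$ along $Q$ and the midpoint-splitting argument via the disjoint palettes) which the paper states more tersely, but the underlying argument is identical.
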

	\begin{proof}
		The proof is an induction on $m$. The base case holds because every tree $T$ of pathwidth $0$ is edgeless and thus anagram-free $1$-colourable. Now assume that every tree with pathwidth $p \leq m$ is anagram-free vertex $(4p+1)$-colourable.
		
		Let $T$ be a tree of pathwidth $m + 1$. By Lemma \ref{lem:mainPath}, there exists a path $P \subseteq T$ such that $\pw(T - V(P)) \leq m$. Each component of $T - V(P)$ has pathwidth at most $m$. By induction we may anagram-free colour each component of $T - V(P)$ with a common set of $4m+1$ colours. Then use four additional colours to anagram-free colour $P$, by \cite{keranen2009abelian}.
		
		We now show that this colouring is anagram-free. Let $Q$ be a path in $T$. If $Q$ is entirely contained within a component of $T - V(P)$, then by induction, $Q$ is not an anagram. Otherwise, $Q$ intersects $P$. The intersection of $Q$ and $P$ is an anagram-free subpath of $P$ and the colours in $Q \cap P$ occur nowhere else in $Q$. Therefore $Q$ is not an anagram.
	\end{proof}
	We now use Theorems \ref{thm:colourPathwidth} and \ref{thm:treeLowerHeight} to prove Theorem \ref{thm:treePathwidthBound}.
	\begingroup
	\def\thetheorem{\ref{thm:treePathwidthBound}}
	\begin{theorem}
		For every tree $T$, $\phi(T) \leq 4\pw(T) + 1$. Moreover, for every $p \geq 0$ there is a tree $T$ such that $\phi(T) \geq p \geq \pw(T)$.
	\end{theorem}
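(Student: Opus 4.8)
The statement has two halves, and the upper bound is essentially free. The inequality $\phi(T) \leq 4\pw(T)+1$ is exactly the conclusion of Theorem~\ref{thm:colourPathwidth}, so for the first half I would simply invoke that theorem and move on.

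For the ``moreover'' half the plan is, for each $p \geq 0$, to exhibit a single tree $T_p$ with $\pw(T_p) \leq p \leq \phi(T_p)$. The small cases are dealt with by inspection: for $p = 0$ take a single vertex, which has pathwidth $0$ and $\phi \geq 0$ trivially, and for $p = 1$ take a nontrivial path, which has pathwidth $1$ and $\phi \geq 1$. For $p \geq 2$ the natural candidate is the tree already constructed in Theorem~\ref{thm:treeLowerHeight}: let $T_p$ be the complete $(p-1)^p$-ary tree of height $p$, so that $\phi(T_p) \geq p$ for free. Everything then reduces to bounding the pathwidth of $T_p$ by $p$.

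The key step is thus the claim that the complete $d$-ary tree of height $h$ has pathwidth at most $h$ for every $d$ and $h$; applying it with $d = (p-1)^p$ and $h = p$ yields $\pw(T_p) \leq p$ and hence $\phi(T_p) \geq p \geq \pw(T_p)$, as required. I would prove this claim by induction on $h$, constructing an explicit path decomposition of width $h$. The base case $h = 0$ is a single vertex. For the inductive step, let $r$ be the root and $S_1,\dots,S_d$ its $d$ subtrees, each a complete $d$-ary tree of height $h-1$; by induction each $S_i$ has a path decomposition of width $h-1$, i.e.\ bags of size at most $h$. Insert $r$ into every bag of each of these decompositions, raising each width to at most $h$, and then concatenate the $d$ modified decompositions in order. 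Because $r$ occurs in every bag, the interval condition for $r$ holds and each root edge $rc_i$ is covered (the root $c_i$ of $S_i$ already appears in some bag of $S_i$'s decomposition); and since each vertex of $S_i$ appears only inside the block coming from $S_i$, the interval condition is preserved for all other vertices. This is a path decomposition of $T_p$ of width $h$.

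The main obstacle is precisely this pathwidth bound. Unlike the rest of the section it cannot be extracted from Lemma~\ref{lem:mainPath} alone: that lemma \emph{removes} a path to decrease pathwidth, whereas here one must \emph{build} a small-width decomposition, so one has to argue directly with path decompositions (or, alternatively, cite a known formula for the pathwidth of complete $d$-ary trees, or the standard inequality relating pathwidth to the centred/elimination structure already used for the radius bound). The remaining work — checking the three path-decomposition axioms for the concatenated decomposition and verifying the two trivial small cases $p=0,1$ — is routine.
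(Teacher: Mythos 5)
Your proposal is correct and follows essentially the same route as the paper: the upper bound is quoted from Theorem~\ref{thm:colourPathwidth}, and the lower bound takes the complete $(p-1)^p$-ary tree of height $p$ from Theorem~\ref{thm:treeLowerHeight} together with inspection of the cases $p=0,1$. The only difference is that where the paper simply invokes the well-known fact that the pathwidth of a tree is at most its radius, you prove the needed instance directly via the standard inductive construction (insert the root into every bag of the subtrees' decompositions and concatenate), which is precisely the ``easily proved'' argument the paper alludes to.
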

	\addtocounter{theorem}{-1}
	\endgroup
	\begin{proof}
		The first part follows directly from Theorem \ref{thm:colourPathwidth}. For the second part it is well known, and easily proved, that the pathwidth of a tree is at most its radius. Therefore, by Theorem \ref{thm:treeLowerHeight}, there exists a tree $T$ with $\phi(T) \geq p \geq \pw(T)$ for all $p \geq 2$. For the remaining cases use the path of order $2$ for $p=1$ and the empty graph for $p = 0$.
	\end{proof}
	
	The main open problem that arises from the above results is whether $\phi$ is bounded for trees of maximum degree 3. The complete binary tree of height $h$ is the key example. Centred colourings provides a trivial upper bound of $h+1$ (Theorem \ref{thm:treeHeightBound}). We now give a non-trivial colouring of the complete binary tree to demonstrate that the bound from centred colouring is not best possible.
	
	\begin{theorem}\label{thm:treeBinaryHeightBound}
		If $T$ is the complete binary tree of height $h$, then 
		\begin{align}\label{eqn:phiTsingle}
			\phi(T) \leq \frac{h}{2} + \frac{1}{2}\log_2(h+1) + 1.
		\end{align}
	\end{theorem}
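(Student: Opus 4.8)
The plan is to improve on the centred bound $\phi(T)\le h+1$ from Theorem \ref{thm:treeHeightBound} by reusing colours across depths, so that most colours serve two levels rather than one. Throughout I would use the sufficient condition that a colouring is anagram-free whenever every subpath contains some colour of odd multiplicity (an anagram has every colour an even number of times); centred colourings realise the stronger ``exactly once'' version with one colour per level, and the aim is to realise the weaker parity version more economically. To make the parity condition manageable I would pass to $\mathbb{F}_2$. For a candidate colouring $\chi\colon V(T)\to[c]$ put $x_v:=e_{\chi(v)}\in\mathbb{F}_2^{c}$ and let $\sigma(v)$ be the sum of $x_u$ over the root-to-$v$ path. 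A short computation shows that for a path $P$ with endpoints $u,w$ and apex (least common ancestor) $a$,
\[
\sum_{v\in P} x_v \;=\; \sigma(u)+\sigma(w)+x_a ,
\]
so $P$ has all multiplicities even exactly when $\sigma(u)+\sigma(w)=x_a$ (with the analogue $\sigma(w)+\sigma(\mathrm{par}(a))\neq 0$ for monotone paths). It therefore suffices to choose $\chi$ so that $\sigma(u)+\sigma(w)$ never equals the weight-one vector $x_a$.

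I would construct $\chi$ by divide-and-conquer on the depth interval $\{0,\dots,h\}$. Pick $O(\log_2(h+1))$ \emph{separator depths} --- the midpoint, then the midpoints of the two halves, and so on --- and colour each separator level with colours used nowhere else; this is the palette responsible for the $\tfrac12\log_2(h+1)$ term. The separators cut the tree into sub-blocks that are themselves forests of complete binary trees of roughly half the height, which I colour recursively from a single shared pool of about $h/2$ colours, the \emph{same} pool being reused in every sub-block at a given recursion level. Reuse across sibling sub-blocks is exactly what should keep the dominant term at $h/2$ rather than $h+1$, while the within-run colourings along separator-free stretches can be taken from Ker{\"a}nen's four-symbol sequence \cite{keranen2009abelian}.

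To verify anagram-freeness I would split on the location of a path's apex $a$. If $a$ sits on a separator level then $x_a$ occurs on no other vertex of the path, so its colour has odd (indeed unit) multiplicity and $P$ is not an anagram. If $a$ lies strictly inside a sub-block, the two arms of $P$ descend through that block, and the recursive within-block colouring must guarantee that the apex colour is not cancelled before a separator is reached.

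The main obstacle is precisely this last case under colour reuse: because the shared pool recurs in sibling sub-blocks, the two descending arms can re-encounter the apex colour, so I must prevent $\sigma(u)+\sigma(w)$ from ever landing on $x_a$. Making the within-block colourings and the separator placement fit together so that every tall path straddles a separator whose colour survives with odd multiplicity --- while simultaneously holding the reused pool to $h/2$ colours rather than $h$ --- is the crux, and I expect it to require choosing these ingredients jointly via the $\mathbb{F}_2$ formulation above (for instance a Sidon-type or greedy argument on the prefix sums $\sigma$) rather than designing the separators and the recursion independently.
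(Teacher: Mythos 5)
Your $\mathbb{F}_2$ reformulation is sound (the identity $\sum_{v\in P}x_v=\sigma(u)+\sigma(w)+x_a$ is correct, and forcing some colour to have odd multiplicity on every subpath does suffice), but the proposal stops exactly where the proof has to start: you say yourself that making the within-block colourings and the separator placement compatible under colour reuse ``is the crux'' and that you \emph{expect} a Sidon-type or greedy argument to handle it. That step is the entire content of the theorem, and nothing in the proposal carries it out. Worse, the one concrete ingredient you commit to --- colouring the separator-free stretches by depth using Ker\"anen's word --- cannot work. Any colouring of a block of the complete binary tree that depends only on depth, say relative depth $a$ receives colour $c_a$, contains an anagram as soon as the word repeats a letter: if $c_a=c_{a+m}$ with $m\geq 2$, take an apex at depth $a$ with one arm of length $m-1$ and the other of length $m$; the two halves of this path have colour multisets $\{c_a,c_{a+1},\ldots,c_{a+m-1}\}$ and $\{c_{a+1},\ldots,c_{a+m}\}$, which are equal. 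Since equal letters at distance $1$ already give a square, a depth-only colouring is anagram-free only if \emph{all} levels receive distinct colours --- so no depth-only scheme (Ker\"anen's word included, which has just four letters) can ever beat the trivial centred bound of $h+1$. Words help only on monotone stretches; to beat $h+1$ you must break the mirror symmetry at apexes by letting siblings receive different colours, and your outline never provides any such mechanism.

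This sibling-distinction is precisely the device in the paper's proof. There, the top block $T_t$ of height $t=2\lfloor (h+2)/4\rfloor\approx h/2$ is coloured with a single colour $c$ on \emph{all} even levels and a distinct \emph{pair} of colours on each odd level, siblings getting different members of the pair (cost $t+1$); each subtree hanging below $T_t$ is coloured recursively, forbidden only the $\tfrac t2+1$ colours on its own ancestor path. Then any path whose apex lies in $T_t$ either contains an odd-depth vertex near the apex whose colour appears exactly once (the pair colours are confined to one level of $T_t$, siblings differ, and descendants avoid ancestors' colours), or, when both endpoints lie in $T_t$, contains $c$ an odd number of times; apexes below $T_t$ are handled by induction. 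Because each bottom subtree may reuse the $\tfrac t2$ sibling colours absent from its ancestor path, a recursion step costs only $\tfrac t2+1\approx\tfrac h4+1$ new colours while halving the height, which sums to $\tfrac h2+\tfrac12\log_2(h+1)+1$. Note finally that your bookkeeping cannot reach this bound as stated: charging only $\tfrac12\log_2(h+1)$ colours to separators, each with a private palette, allows at most $\tfrac12\log_2(h+1)$ separator levels, hence recursion depth $O(\log\log h)$ and blocks of height about $h/\log_2 h$ whose heights sum to about $h$ --- and colouring all of those from a shared pool of $h/2$ colours without cross-block anagrams is exactly the problem you began with.
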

	\begin{proof}
		We proceed by induction on $h$. The base case is satisfied as follows:
		\begin{align*}
			\phi(\text{singleton vertex}) = 1 \leq \frac{0}{2} + \frac{1}{2} \log_2(0 + 1) + 1 = 1.
		\end{align*}
		
		Now assume the result holds up to $h-1$. Let $T$ be the complete binary tree of height $h$ with root $r$. Let $t:= 2\floor*{\frac{h + 2}{4}}$ and $T_t \subseteq T$ be the complete binary tree of height $t$ with root $r$. $T_t$ is the top half of $T$, which we colour directly. Let $b := h - t - 1$, this is the height of each subtree which will be coloured by induction. Colour $T_t$ as follows:
		\begin{itemize}
			\item All vertices with even depth receive the same colour. Call this colour $c$.
			\item Each odd level is allocated distinct set of two colours. Vertices of odd depth are coloured with one of the two colours allocated to their level, so that each vertex receives a colour different from their sibling.
		\end{itemize}
		Note that the leaves of $T_t$ have even depth so have colour $c$. This colouring is shown for $h = 8$ in Figure \ref{fig:binaryTreeVertex}.
		
		\begin{figure}[h]
			\begin{center}
				\begin{tikzpicture}[
						line width = 1.4pt, 
						vertex/.style={circle,inner sep=0pt,minimum size=0.5cm}, 
						sVertex/.style={circle,inner sep=0pt,minimum size=0.15cm}, 
						point/.style={circle,inner sep=0pt,minimum size=0cm}, 
						scale = 0.15
					]
					
					\node[draw = black, fill = white] (v) at     ($(0,0)$) [vertex] {$1$};
					
					\node[draw = black, fill = white] (v1) at    ($(20,  -5)$) [vertex] {$3$};
					\node[draw = black, fill = white] (v0) at    ($(-20, -5)$) [vertex] {$2$};
					
					\node[draw = black, fill = white] (v11) at   ($(30,  -10)$) [vertex] {$1$};
					\node[draw = black, fill = white] (v10) at   ($(10,  -10)$) [vertex] {$1$};
					\node[draw = black, fill = white] (v01) at   ($(-10, -10)$) [vertex] {$1$};
					\node[draw = black, fill = white] (v00) at   ($(-30, -10)$) [vertex] {$1$};
					
					\node[draw = black, fill = white] (v111) at  ($(35,  -15)$) [vertex] {$5$};
					\node[draw = black, fill = white] (v110) at  ($(25,  -15)$) [vertex] {$4$};
					\node[draw = black, fill = white] (v101) at  ($(15,  -15)$) [vertex] {$5$};
					\node[draw = black, fill = white] (v100) at  ($(5,   -15)$) [vertex] {$4$};
					\node[draw = black, fill = white] (v011) at  ($(-5,  -15)$) [vertex] {$5$};
					\node[draw = black, fill = white] (v010) at  ($(-15, -15)$) [vertex] {$4$};
					\node[draw = black, fill = white] (v001) at  ($(-25, -15)$) [vertex] {$5$};
					\node[draw = black, fill = white] (v000) at  ($(-35, -15)$) [vertex] {$4$};
					
					\node[draw = black, fill = white] (v1111) at ($(37.5,  -20)$) [vertex] {$1$};
					\node[draw = black, fill = white] (v1110) at ($(32.5,  -20)$) [vertex] {$1$};
					\node[draw = black, fill = white] (v1101) at ($(27.5,  -20)$) [vertex] {$1$};
					\node[draw = black, fill = white] (v1100) at ($(22.5,  -20)$) [vertex] {$1$};
					\node[draw = black, fill = white] (v1011) at ($(17.5,  -20)$) [vertex] {$1$};
					\node[draw = black, fill = white] (v1010) at ($(12.5,  -20)$) [vertex] {$1$};
					\node[draw = black, fill = white] (v1001) at ($(7.5,   -20)$) [vertex] {$1$};
					\node[draw = black, fill = white] (v1000) at ($(2.5,   -20)$) [vertex] {$1$};
					\node[draw = black, fill = white] (v0111) at ($(-2.5,  -20)$) [vertex] {$1$};
					\node[draw = black, fill = white] (v0110) at ($(-7.5,  -20)$) [vertex] {$1$};
					\node[draw = black, fill = white] (v0101) at ($(-12.5, -20)$) [vertex] {$1$};
					\node[draw = black, fill = white] (v0100) at ($(-17.5, -20)$) [vertex] {$1$};
					\node[draw = black, fill = white] (v0011) at ($(-22.5, -20)$) [vertex] {$1$};
					\node[draw = black, fill = white] (v0010) at ($(-27.5, -20)$) [vertex] {$1$};
					\node[draw = black, fill = white] (v0001) at ($(-32.5, -20)$) [vertex] {$1$};
					\node[draw = black, fill = white] (v0000) at ($(-37.5, -20)$) [vertex] {$1$};
						
					\node[draw = black, fill = black] (v11111) at ($(37.5  + 3,  -25)$)   [sVertex] {};
					
					\node[draw = black, fill = black] (v11110) at ($(32.5  + 1, -24)$)   [point] {};
					\node[draw = black, fill = black] (v11101) at ($(27.5  + 1, -24)$)   [point] {};
					\node[draw = black, fill = black] (v11100) at ($(22.5  + 1, -24)$)   [point] {};
					\node[draw = black, fill = black] (v11011) at ($(17.5  + 1, -24)$)   [point] {};
					\node[draw = black, fill = black] (v11010) at ($(12.5  + 1, -24)$)   [point] {};
					\node[draw = black, fill = black] (v11001) at ($(7.5   + 1, -24)$)   [point] {};
					\node[draw = black, fill = black] (v11000) at ($(2.5   + 1, -24)$)   [point] {};
					\node[draw = black, fill = black] (v10111) at ($(-2.5  + 1, -24)$)   [point] {};
					\node[draw = black, fill = black] (v10110) at ($(-7.5  + 1, -24)$)   [point] {};
					\node[draw = black, fill = black] (v10101) at ($(-12.5 + 1, -24)$)   [point] {};
					\node[draw = black, fill = black] (v10100) at ($(-17.5 + 1, -24)$)   [point] {};
					\node[draw = black, fill = black] (v10011) at ($(-22.5 + 1, -24)$)   [point] {};
					\node[draw = black, fill = black] (v10010) at ($(-27.5 + 1, -24)$)   [point] {};
					\node[draw = black, fill = black] (v10001) at ($(-32.5 + 1, -24)$)   [point] {};
					
					\node[draw = black, fill = black] (v10000) at ($(-37.5 + 3, -25)$) [sVertex] {};
					
					\node[draw = black, fill = black] (v01111) at ($(37.5  - 3, -25)$) [sVertex] {};
					
					\node[draw = black, fill = black] (v01110) at ($(32.5  - 1, -24)$)   [point] {};
					\node[draw = black, fill = black] (v01101) at ($(27.5  - 1, -24)$)   [point] {};
					\node[draw = black, fill = black] (v01100) at ($(22.5  - 1, -24)$)   [point] {};
					\node[draw = black, fill = black] (v01011) at ($(17.5  - 1, -24)$)   [point] {};
					\node[draw = black, fill = black] (v01010) at ($(12.5  - 1, -24)$)   [point] {};
					\node[draw = black, fill = black] (v01001) at ($(7.5   - 1, -24)$)   [point] {};
					\node[draw = black, fill = black] (v01000) at ($(2.5   - 1, -24)$)   [point] {};
					\node[draw = black, fill = black] (v00111) at ($(-2.5  - 1, -24)$)   [point] {};
					\node[draw = black, fill = black] (v00110) at ($(-7.5  - 1, -24)$)   [point] {};
					\node[draw = black, fill = black] (v00101) at ($(-12.5 - 1, -24)$)   [point] {};
					\node[draw = black, fill = black] (v00100) at ($(-17.5 - 1, -24)$)   [point] {};
					\node[draw = black, fill = black] (v00011) at ($(-22.5 - 1, -24)$)   [point] {};
					\node[draw = black, fill = black] (v00010) at ($(-27.5 - 1, -24)$)   [point] {};
					\node[draw = black, fill = black] (v00001) at ($(-32.5 - 1, -24)$)   [point] {};
					
					\node[draw = black, fill = black] (v00000) at ($(-37.5 - 3, -25)$)   [sVertex] {};
					
					\draw [|-|, left] ($(-45, -20)$) -- ($(-45, 0)$) node[midway, color=black] {$t = 4$};
					\draw [|-|, left] ($(-45, -31.2)$)  -- ($(-45, -25)$) node[midway, color=black] {$b = 3$};
					
					\node (tri) [isosceles triangle, draw, minimum width = 7, minimum height = 26, shape border rotate=90] at ($(-37.5 - 3, -29.5)$) {};
					\node (tri) [isosceles triangle, draw, minimum width = 7, minimum height = 26, shape border rotate=90] at ($(-37.5 + 3, -29.5)$) {};
					\node (tri) [isosceles triangle, draw, minimum width = 7, minimum height = 26, shape border rotate=90] at ($( 37.5 - 3, -29.5)$) {};
					\node (tri) [isosceles triangle, draw, minimum width = 7, minimum height = 26, shape border rotate=90] at ($( 37.5 + 3, -29.5)$) {};
					
					\node[draw = black, fill = black] (v111110) at ($(37.5  + 3 - 2.5, -32)$)  [point] {};
					\node[draw = black, fill = black] (v111111) at ($(37.5  + 3 + 2.5, -32)$)  [point] {};

					\node[draw = black, fill = black] (v100000) at ($(-37.5 + 3 - 2.5, -32)$)  [point] {};
					\node[draw = black, fill = black] (v100001) at ($(-37.5 + 3 + 2.5, -32)$)  [point] {};

					\node[draw = black, fill = black] (v011110) at ($(37.5  - 3 - 2.5, -32)$)  [point] {};
					\node[draw = black, fill = black] (v011111) at ($(37.5  - 3 + 2.5, -32)$)  [point] {};
					
					\node[draw = black, fill = black] (v000000) at ($(-37.5 - 3 - 2.5, -32)$)  [point] {};
					\node[draw = black, fill = black] (v000001) at ($(-37.5 - 3 + 2.5, -32)$)  [point] {};
					
					\node[opacity = 1, minimum size=0cm] (p) at ($(-37.5, -33)$) [vertex] {$\{3,5,6\}$};
					\node[opacity = 1, minimum size=0cm] (q) at ($( 37.5, -33)$) [vertex] {$\{2,4,6\}$};
					
					\draw[draw=black](v)--(v1);
					\draw[draw=black](v)--(v0);
					
					\draw[draw=black](v1)--(v11);
					\draw[draw=black](v1)--(v10);
					\draw[draw=black](v0)--(v01);
					\draw[draw=black](v0)--(v00);
					
					\draw[draw=black](v11)--(v111);
					\draw[draw=black](v11)--(v110);
					\draw[draw=black](v10)--(v101);
					\draw[draw=black](v10)--(v100);
					\draw[draw=black](v01)--(v011);
					\draw[draw=black](v01)--(v010);
					\draw[draw=black](v00)--(v001);
					\draw[draw=black](v00)--(v000);
					
					\draw[draw=black](v111)--(v1111);
					\draw[draw=black](v111)--(v1110);
					\draw[draw=black](v110)--(v1101);
					\draw[draw=black](v110)--(v1100);
					\draw[draw=black](v101)--(v1011);
					\draw[draw=black](v101)--(v1010);
					\draw[draw=black](v100)--(v1001);
					\draw[draw=black](v100)--(v1000);
					\draw[draw=black](v011)--(v0111);
					\draw[draw=black](v011)--(v0110);
					\draw[draw=black](v010)--(v0101);
					\draw[draw=black](v010)--(v0100);
					\draw[draw=black](v001)--(v0011);
					\draw[draw=black](v001)--(v0010);
					\draw[draw=black](v000)--(v0001);
					\draw[draw=black](v000)--(v0000);
					
					\draw[draw=black](v1111)--(v11111);
					
					\draw[draw=black, dashed](v1110)--(v11110);
					\draw[draw=black, dashed](v1101)--(v11101);
					\draw[draw=black, dashed](v1100)--(v11100);
					\draw[draw=black, dashed](v1011)--(v11011);
					\draw[draw=black, dashed](v1010)--(v11010);
					\draw[draw=black, dashed](v1001)--(v11001);
					\draw[draw=black, dashed](v1000)--(v11000);
					\draw[draw=black, dashed](v0111)--(v10111);
					\draw[draw=black, dashed](v0110)--(v10110);
					\draw[draw=black, dashed](v0101)--(v10101);
					\draw[draw=black, dashed](v0100)--(v10100);
					\draw[draw=black, dashed](v0011)--(v10011);
					\draw[draw=black, dashed](v0010)--(v10010);
					\draw[draw=black, dashed](v0001)--(v10001);
					
					\draw[draw=black](v0000)--(v10000);
					
					\draw[draw=black](v1111)--(v01111);
					
					\draw[draw=black, dashed](v1110)--(v01110);
					\draw[draw=black, dashed](v1101)--(v01101);
					\draw[draw=black, dashed](v1100)--(v01100);
					\draw[draw=black, dashed](v1011)--(v01011);
					\draw[draw=black, dashed](v1010)--(v01010);
					\draw[draw=black, dashed](v1001)--(v01001);
					\draw[draw=black, dashed](v1000)--(v01000);
					\draw[draw=black, dashed](v0111)--(v00111);
					\draw[draw=black, dashed](v0110)--(v00110);
					\draw[draw=black, dashed](v0101)--(v00101);
					\draw[draw=black, dashed](v0100)--(v00100);
					\draw[draw=black, dashed](v0011)--(v00011);
					\draw[draw=black, dashed](v0010)--(v00010);
					\draw[draw=black, dashed](v0001)--(v00001);
					
					\draw[draw=black](v0000)--(v00000);
				\end{tikzpicture}
			\end{center}
			\caption{Schematic of colouring the complete binary tree with $h = 8$. The set of colours on shown subtrees of height $b$ are shown below the trees.}
			\label{fig:binaryTreeVertex}
		\end{figure}
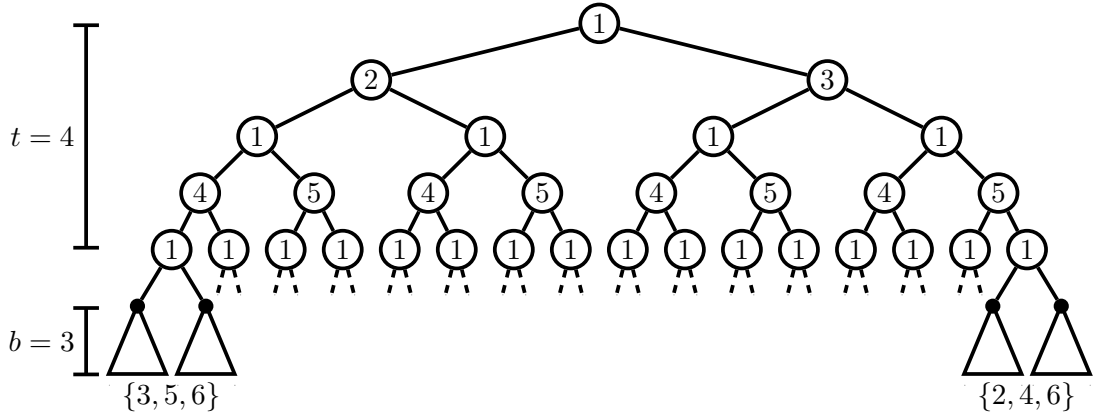
		
		Colour each remaining subtree of $T$ by induction, avoiding colours that occur on their ancestors in $T_t$.
		
		\begin{claim*}
			This colouring of $T$ is anagram-free.
		\end{claim*}
		\begin{proof}
			Let $P$ be a path in $T$ with even order at least $2$. Let $u$ be the shallowest vertex in $P$. If $u \notin V(T_t)$ then, by induction, $P$ is not an anagram. Now consider the case where $u \in V(T_t)$. If $u$ has odd depth then its colour is unique in $P$. Indeed, in $T_t$ the colour of $u$ only occurs in the level of $u$ and in $T - V(T_t)$ the ancestors of $u$ avoid its colour. Similarly, if $u$ has even depth and $u$ is an endpoint of $P$ then the child of $u$ in $P$ is uniquely coloured in $P$.
			
			The remainder of the proof is concerned with the case where $u \in V(T_t)$, $u$ has even depth and neither endpoint of $P$ is $u$. Let $x_1,x_2 \in V(P)$ be the endpoints of $P$ and let $v_1,v_2 \in V(P)$ be the children of $u$ such that $v_i$ is an ancestor of $x_i$ for $i \in [2]$. If both $x_1, x_2 \notin V(T_t)$ then $v_1$ (and indeed $v_2$) is uniquely coloured in $P$. In the remaining cases at least one of $x_1$ and $x_2$ are in $V(T_t)$.
			
			Without loss of generality let $x_1 \in V(T_t)$. If $x_2 \notin V(T_t)$ then $x_1$ is uniquely coloured in $P$. If $x_2 \in V(T_t)$ then the colour $c$ occurs an odd number of times in $P$. Indeed, $u$ has colour $c$ and $x_1,x_2 \in V(T_t)$ implies that both the $v_1x_1$-path and the $v_2x_2$-path contain the same number of vertices with colour $c$.
		\end{proof}
		
		To complete the proof we show that this colouring satisfies Equation (\ref{eqn:phiTsingle}). Our colouring of $T_t$ uses $t + 1$ colours as the even levels share a colour and there are $\frac{t}{2}$ odd levels which each use $2$ colours.
				
		Let $v$ be a child of a leaf of $T_t$ and $T_b$ be the subtree of $T$ rooted at $v$. Recall that colours on the $vr$-path do not occur in $T_b$. The number of distinct colours on the $vr$-path is $\frac{t}{2} + 1$ because the path contains $t+1$ vertices and $\frac{t}{2} + 1$ of them have even depth so share colour $c$. Therefore the colouring of $T_b$ can reuse $\frac{t}{2}$ of the colours used to colour $T_t$. So our colouring of $T$ requires $\phi(T_b) - \frac{t}{2}$ colours in addition to those used to colour $T_t$. So
		\begin{align*}
			\phi(T) \leq t + 1 + (\phi(T_b) - \frac{t}{2}) = \frac{t}{2} + 1 + \phi\lr{T_{h - t - 1}}.
		\end{align*}
		By induction, since $T_b$ is a complete binary tree of height $h-t-1$,
		\begin{align*}
				\phi(T) &\leq \frac{t}{2} + 1 + \lr{\frac{h - t - 1}{2} + \frac{1}{2} \log_2\lr{h - t} + 1} \\
				&= \frac{h}{2} + \frac{1}{2} + \frac{1}{2} \log_2\lr{h - t} + 1\\
				&= \frac{h}{2} + \frac{1}{2} + \frac{1}{2} \log_2\lr{h - 2\floor*{\frac{h + 2}{4}}} + 1\\
				&\leq \frac{h}{2} + \frac{1}{2} + \frac{1}{2} \log_2\lr{h - \frac{h - 1}{2}} + 1 \\
				&= \frac{h}{2} + \frac{1}{2} + \frac{1}{2} \log_2\lr{\frac{h + 1}{2}} + 1 \\
				&= \frac{h}{2} + \frac{1}{2} \log_2\lr{h + 1} + 1. \qedhere
		\end{align*}
	\end{proof}
	
	\section{$k$-anagram-free colourings}\label{sec:kPower}
	
	Recall that a $k$-anagram consists of $k$ independently permuted copies of a word. In terms of colour multisets, a word $W_1W_2\ldots W_k$ is a $k$-anagram if for $i,j \in [k]$,
	\begin{align*}
	\space M(W_i)=M(W_j).	
	\end{align*}
	
	We defined $k$-anagram-free colouring as a generalisation of anagram-free colouring. Recall that $\phi_k(G)$ and $\phi'_k(G)$ are the $k$-anagram-free chromatic number and index respectively. In this section we show that $\phi_k$ is unbounded on graphs of maximum degree $k + 1$ and that $\phi_4$ and $\phi'_4$ are bounded on trees. The first result is a generalisation of Theorem \ref{thm:noDegreeBound}, that $\phi$ is unbounded on graphs of maximum degree $3$. The second result contrasts with Theorems \ref{thm:noDegreeBoundEdge} and \ref{thm:treeDegreeBound} because it shows a shift in behaviour from unbounded to bounded as $k$ increases.
	
	\subsection{Lower bounds}
	We now prove Theorem \ref{thm:noKPowerBound}, which says that $\phi_k$ is not bounded by a function of maximum degree. The method is similar to that used in Theorem \ref{thm:noDegreeBound} to prove $\phi$ is not bounded by maximum degree. For each $k$ and $c$ we recursively construct a graph $G$ such that every $c$-colouring of $G$ contains a $k$-anagram. The proof generalises the $k = 2$ case in the sense that Theorem \ref{thm:noDegreeBound} is implied by Theorem \ref{thm:noKPowerBound}.
	
	\begingroup
	\def\thetheorem{\ref{thm:noKPowerBound}}
	\begin{theorem}
		For $k\geq 2$, the $k$-anagram-free chromatic number is unbounded on graphs of maximum degree $k+1$.
	\end{theorem}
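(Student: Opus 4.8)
The plan is to mimic the proof of Theorem~\ref{thm:noDegreeBound}: I would build a graph $G$, depending on $k$ and $c$, with exponentially many leaves but only polynomially many colour multisets available on its root-to-leaf paths, and then convert a multiset collision into a $k$-anagram. The only change in the counting is that a single collision no longer suffices: to realise $k$ equal blocks $W_1,\dots,W_k$ I need $k$ leaves whose root-to-leaf paths carry a \emph{common} colour multiset. Each such path has a fixed number of vertices, say $\ell=h+1$, so by Equation~(\ref{eqn:colourMult}) the number of available multisets is at most $(\ell+1)^c$, and the usual pigeonhole gives $k$ leaves sharing a multiset as soon as $G$ has more than $(k-1)(\ell+1)^c$ leaves. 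An exponential leaf count makes this automatic once $h$ is large; this half of the argument is routine.

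For the construction I would generalise the binary branch/extend tree of Theorem~\ref{thm:noDegreeBound} to a $k$-ary one and add sibling edges. Concretely, take a rooted tree in which vertices of even depth have $k$ children, non-leaf vertices of odd depth have a single child, and all vertices of depth $h$ (with $h$ odd) are leaves; then join every set of siblings into a path. The alternation between branching and extending is exactly what controls the degree: a branch vertex has one parent and $k$ children, giving degree $k+1$, while a vertex on a sibling path has its parent, its single child, and at most two sibling neighbours, giving degree at most $4\leq k+1$ for $k\geq3$ (and degree $3$ when $k=2$, recovering the original graph). All root-to-leaf paths have $h+1$ vertices, and the leaf count is $k^{(h+1)/2}$, which is exponential, so the counting above applies.

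The substantive step, and the main obstacle, is the \emph{folding}: given $k$ leaves whose root-to-leaf paths share a colour multiset, I must assemble their tails into a single path of $G$ that reads as a $k$-anagram. After deleting the common ancestral part, the $k$ tails have equal length, and since all $k$ root-path multisets agree while the deleted parts have equal multisets, the tails have equal colour multisets; so if they can be strung end to end into one path, the result is $W_1W_2\cdots W_k$ with $M(W_1)=\cdots=M(W_k)$, a $k$-anagram. For $k=2$ this is immediate: the two tails meet at the two children of the least common ancestor, and the single sibling edge there folds them into one path. For general $k$ one fold is not enough---an accordion of $k$ tails needs $k-1$ folds alternating between the top ends and the bottom ends of consecutive tails---and the difficulty is that $k$ arbitrarily chosen colliding leaves need not meet the sibling edges in the required alternating pattern. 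I expect to resolve this in one of two ways: either (i) refine the pigeonhole so that the $k$ leaves are forced to branch off a common even-depth vertex, with the sibling edges there supplying the top folds and companion edges at the leaf level supplying the bottom folds; or (ii) make the construction genuinely recursive, so that the role of a ``leaf'' is played by the top of a smaller copy of the same gadget and the bottom folds are produced one level down. Either way, the key lemma to prove is that any $k$ equal-multiset, equal-length tails selected by the adversary can be chained into a single path while the maximum degree stays at $k+1$.

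Finally I would fix the parameters: for each $c$ choose $h$ large enough that $k^{(h+1)/2}>(k-1)(h+2)^c$, run the pigeonhole to obtain the $k$ colliding leaves, and apply the folding lemma to extract a $k$-anagram. Letting $c\to\infty$ then shows $\phi_k$ is unbounded on graphs of maximum degree $k+1$, and setting $k=2$ recovers Theorem~\ref{thm:noDegreeBound}, as promised.
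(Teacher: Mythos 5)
Your plan correctly reproduces the $k=2$ argument, but the step you yourself flag as the main obstacle---the folding---is a genuine gap, and neither of your proposed fixes closes it as stated. Option (i) cannot work with your construction: even if the pigeonhole is refined so that all $k$ colliding leaves hang below a common even-depth vertex $v$, the accordion path needs edges joining the \emph{bottoms} of consecutive tails, and in your graph the leaves of distinct subtrees below $v$ have different parents, hence no sibling edges; a path can therefore traverse at most two of the $k$ tails, exactly as in the $k=2$ case. No refinement of the pigeonhole supplies the missing edges. There is also a mismatch in the counting statement itself: ``more than $(k-1)(\ell+1)^c$ leaves forces $k$ leaves with a common multiset'' is not what a recursive construction needs. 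What is needed is a multiset common to $k$ \emph{prescribed, pairwise disjoint} families of paths (one family per sub-gadget), and $k$ leaves sharing a multiset may all sit inside the same sub-gadget, where they are useless.

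The paper's proof of Theorem \ref{thm:noKPowerBound} resolves exactly this point with a win--win invariant that your proposal lacks. It recursively builds gadgets with two degree-$1$ terminals $u,v$, pre-wiring the top and bottom connections between consecutive copies (edges $u_iu_{i+1}$ for even $i$ and $v_iv_{i+1}$ for odd $i$, so the $k$ copies are already strung into a snake), and proves the statement $S(t)$: under every colouring, the level-$t$ gadget either contains a $k$-anagram or realises at least $\left(\frac{k}{k-1}\right)^t$ distinct colour multisets on its $uv$-paths of length $4t$. In the inductive step, if some multiset lies in every $D_i$ (the multisets realised in copy $G_i$), then one path from each copy concatenates along the pre-wired snake into a $k$-anagram; if no multiset is common to all copies, the bound $|\mathcal{U}|(k-1)\geq\sum_{i\in[k]}|D_i|$ forces the number of realised multisets to grow by a factor $\frac{k}{k-1}$. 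The final contradiction is then your polynomial-versus-exponential count, but applied to \emph{realised multisets} rather than leaves: choose $t$ with $\left(\frac{k}{k-1}\right)^t>(4t+2)^c$. Your option (ii) gestures towards this recursion, but without the dichotomy ``anagram, or many multisets'' your pigeonhole never produces the common multiset across all $k$ copies that the folding requires, so the key lemma you defer is precisely the part of the proof that remains unproved.
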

	\addtocounter{theorem}{-1}
	\endgroup
	\begin{proof}
		Let $S(t)$ be the statement that there exists a graph $G$ with $\Delta(G)\leq k+1$ and special vertices $u$ and $v$ with $\deg(u)=\deg(v)=1$ such that for every vertex colouring of $G$ at least one of the following holds:
		\begin{itemize}
			\item $G$ contains a $k$-anagram, or
			\item $|D| \geq \left(\frac{k}{k-1}\right)^t$, where $D$ is the set of colour multisets on $uv$-paths with length $4t$.
		\end{itemize}
	
		\begin{claim*}
			$S(t)$ is true for all $t \geq 1$.
		\end{claim*}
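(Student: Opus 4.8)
The plan is to prove the claim by induction on $t$, constructing the gadget $G_t$ recursively from $k$ disjoint copies of $G_{t-1}$. The two alternatives in $S(t)$ are best read as a single dichotomy produced by a double-counting argument at each level: a $c$-colouring either forces enough repetition among the copies to build a $k$-anagram, or it must spread the colour multisets of its $uv$-paths widely, so that the count $|D|$ grows by a factor $\frac{k}{k-1}$ with each level. The base case gets this started and the inductive step propagates it.

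For the base case $S(1)$ I would take hub vertices $a,b$, pendant edges $ua$ and $bv$, internal vertices $x_1,\dots,x_k$ each adjacent to both $a$ and $b$, and a path of extra edges $x_1x_2\cdots x_k$. Each hub then has degree $k+1$; the chain endpoints have degree $3$ and the internal chain vertices degree $4$, so since degree-$4$ vertices occur only when $k\geq3$ the bound $\Delta\leq k+1$ holds. The length-$4$ $uv$-paths are exactly $u,a,x_i,b,v$, whose colour multisets differ only in $\mathrm{col}(x_i)$, so $|D|$ equals the number of distinct colours among the $x_i$. If $|D|\geq2$ then $|D|\geq2\geq\frac{k}{k-1}$; otherwise all $x_i$ share a colour and the subpath $x_1x_2\cdots x_k$ is a monochromatic word $W^k$, hence a $k$-anagram.

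For the inductive step I attach to $k$ disjoint copies $C_1,\dots,C_k$ of $G_t$ (with terminals $u_i,v_i$) a hub $a$ adjacent to $u$ and all $u_i$, a hub $b$ adjacent to $v$ and all $v_i$, and chaining edges $v_iu_{i+1}$ for $1\leq i<k$; the hubs reach degree $k+1$ and the chained terminals degree at most $3$, preserving $\Delta\leq k+1$. Every length-$4(t+1)$ $uv$-path of the shape $u,a,u_i,[\text{inner}],v_i,b,v$ meets a single copy, so its multiset is $F\cup M$ with $F=\{\mathrm{col}(u),\mathrm{col}(a),\mathrm{col}(b),\mathrm{col}(v)\}$ fixed and $M\in D^{(i)}_t$; as $M\mapsto F\cup M$ is injective, $\bigl|D_{t+1}\bigr|\geq\bigl|\bigcup_i D^{(i)}_t\bigr|$. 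Applying the hypothesis to each copy, if some $C_i$ contains a $k$-anagram then so does $G_{t+1}$; otherwise $\bigl|D^{(i)}_t\bigr|\geq\lr{\frac{k}{k-1}}^{t}$ for all $i$, and I count incidences $(i,M)$ with $M\in D^{(i)}_t$. If no multiset lies in all $k$ copies, each lies in at most $k-1$, so $k\lr{\frac{k}{k-1}}^{t}\leq\sum_i\bigl|D^{(i)}_t\bigr|\leq(k-1)\bigl|\bigcup_i D^{(i)}_t\bigr|$ and hence $\bigl|D_{t+1}\bigr|\geq\lr{\frac{k}{k-1}}^{t+1}$; if instead some multiset $M$ lies in all $k$ copies, each copy has an inner $u_iv_i$-path of multiset $M$, and the chaining edges splice these $k$ equal-multiset segments into one simple path whose colour sequence is a $k$-anagram.

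The main obstacle is the tension between the two jobs the copies must do. The counting needs every length-$4(t+1)$ $uv$-path to touch exactly one copy, so that $D_{t+1}$ factors cleanly as $F$ plus an inner multiset; but producing the $k$-anagram needs a single simple path to run through all $k$ copies consecutively. Reconciling these within maximum degree $k+1$ is precisely what forces the hub-plus-chaining design, and the delicate points are checking that the chaining edges neither introduce shorter counted $uv$-paths nor raise any degree above $k+1$, and that a multiset common to all $k$ copies really can be spliced into one \emph{simple} path (the copies being vertex-disjoint makes this routine).
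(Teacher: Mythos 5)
Your proof is correct and follows essentially the same route as the paper: the same base gadget (a path on $k$ vertices joined completely to two hubs $a,b$ with pendants $u,v$), the same recursive hub-and-chain construction on $k$ disjoint copies, and the same dichotomy settled by double counting the colour multisets shared among the copies. The only differences are cosmetic: you chain the copies uniformly via $v_iu_{i+1}$ where the paper zigzags ($u_iu_{i+1}$ for even $i$, $v_iv_{i+1}$ for odd $i$), and you invoke only the injection $M \mapsto F \cup M$ to get $|D| \geq |\bigcup_i D_i|$ where the paper asserts a bijection; both choices work equally well.
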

		\begin{proof}
			We proceed by induction on $t$. First we prove $S(1)$. Let $G$ be the graph obtained from a path $P$ of order $k$ by adding vertices $a$ and $b$, each adjacent to every vertex in $P$. Add vertices $u$ and $v$ to $G$ as well as edges $ua$ and $vb$. Note that $G$ satisfies the degree requirements of $S(1)$. Fix a colouring of $G$. If $P$ is monochromatic then it is a $k$-anagram. If $P$ is not monochromatic, then there are two paths $u,a,p_1,b,v$ and $u,a,p_2,b,v$ with distinct colour multisets. Therefore $|D| \geq 2 \geq \frac{k}{k-1}$ and so $S(1)$ is true. This graph is shown in Figure \ref{fig:kPowerbase} for $k = 4$.
			
			Assume $S(t)$ is true for some $t \geq 1$. Let $G_1, \ldots, G_k$ be copies of the graph guaranteed to exist by $S(t)$. Denote the two special vertices of $G_i$ by $u_i$ and $v_i$. Let $G$ be the graph with $V(G) = \{u,v,a,b\}$ and $E(G) = \{ua, vb\}$. Add $G_1, \ldots, G_k$ to $G$ as disjoint components. Finally, add the following edges to $G$:
			\begin{enumerate}[(i)]
				\item $u_ia$ for all $i \in [k]$.
				\item $v_ib$ for all $i \in [k]$.
				\item $u_iu_{i+1}$ for all even $i \in [k-1]$.
				\item $v_iv_{i+1}$ for all odd $i \in [k-1]$.
			\end{enumerate}
			This construction is shown in Figure \ref{fig:kPowerInduction} for $k = 4$.
			
			First we show that $G$ satisfies the degree requirements of $S(t+1)$. Clearly, $\deg(u)=\deg(v)=1$ and $\deg(a) =\deg(b)= k+1$. Each $u_i$ has degree $1$ in $G_i$ so $\deg(u_i) \leq 3 \leq k+1$. Similarly, $\deg(v_i) \leq 3 \leq k+1$. Every other vertex has the same degree as in $G_i$, which is at most $k+1$.
			
			Now fix a colouring of $G$. If some $G_i$ contains a $k$-anagram then $S(t+1)$ is satisfied so assume that each $G_i$ is $k$-anagram-free. Let $D_i$ be the set of colour multisets on paths of length $4t$ in $G_i$ with endpoints $u_i$ and $v_i$. By $S(t)$, we have $|D_i| \geq (k/(k-1))^t$ for all $i \in [k]$. We now split the proof into two cases.
				
			\begin{figure}
				\centering
				\begin{minipage}{.45\textwidth}
					\centering
					\begin{tikzpicture}
						[line width=1.4pt,vertex/.style={circle,inner sep=0pt,minimum size=0.15cm}, scale = 1]
						
						\node[draw = black, fill = black] (u) at     ($(-2.5,0)$) [vertex] [label=$u$]{};
						\node[draw = black, fill = black] (a) at     ($(-1.4,0)$) [vertex] [label=$a$]{};
						
						\node[draw = black, fill = black] (b) at     ($(1.4,0)$) [vertex] [label=$b$]{};
						\node[draw = black, fill = black] (v) at     ($(2.5,0)$) [vertex] [label=$v$]{};
						
						\node[draw = black, fill = black] (p1) at     ($(0,1.6)$) [vertex] {};
						\node[draw = black, fill = black] (p2) at     ($(0,0.525)$) [vertex] {};
						\node[draw = black, fill = black] (p3) at     ($(0,-0.525)$) [vertex] {};
						\node[draw = black, fill = black] (p4) at     ($(0,-1.6)$) [vertex] {};
						
						\draw[draw = black](u)--(a);
						\draw[draw = black](v)--(b);
						\draw[draw = black](p1)--(a);
						\draw[draw = black](p1)--(b);
						\draw[draw = black](p2)--(a);
						\draw[draw = black](p2)--(b);
						\draw[draw = black](p3)--(a);
						\draw[draw = black](p3)--(b);
						\draw[draw = black](p4)--(a);
						\draw[draw = black](p4)--(b);
						
						\draw[draw = black](p1)--(p2)--(p3)--(p4);
						
					\end{tikzpicture}
					\captionof{figure}
					{A graph satisfying $S(1)$ for \newline $k = 4$.}
					\label{fig:kPowerbase}
				\end{minipage}
				\hspace{10pt}
				\begin{minipage}{.45\textwidth}
					\centering
					\begin{tikzpicture}
							[line width=1.4pt,vertex/.style={circle,inner sep=0pt,minimum size=0.15cm}, scale = 1]
						
						\node[draw = black, fill = black] (u) at     ($(-3  ,0)$) [vertex] [label=$u$]{};
						\node[draw = black, fill = black] (a) at     ($(-2.2,0)$) [vertex] [label=$a$]{};
						
						\node[draw = black, fill = black] (b) at     ($(2.2,0)$) [vertex] [label=$b$]{};
						\node[draw = black, fill = black] (v) at     ($(3  ,0)$) [vertex] [label=$v$]{};
						
						\node[draw = black, fill = black, label = {[label distance=-5pt]135:$u_1$}] (u1) at     ($( -1,1.5)$) [vertex] {};
						\node[draw = black, fill = black, label = {[label distance=-5pt]135:$u_2$}] (u2) at     ($( -1,0.5)$) [vertex] {};
						\node[draw = black, fill = black, label = {[label distance=-5pt]225:$u_3$}] (u3) at     ($(-1,-0.5)$) [vertex] {};
						\node[draw = black, fill = black, label = {[label distance=-5pt]225:$u_4$}] (u4) at     ($(-1,-1.5)$) [vertex] {};
						                                                            
						\node[draw = black, fill = black, label = {[label distance=-5pt]45:$v_1$}] (v1) at     ($(  1,1.5)$) [vertex] {};
						\node[draw = black, fill = black, label = {[label distance=-5pt]45:$v_2$}] (v2) at     ($(  1,0.5)$) [vertex] {};
						\node[draw = black, fill = black, label = {[label distance=-5pt]315:$v_3$}] (v3) at     ($( 1,-0.5)$) [vertex] {};
						\node[draw = black, fill = black, label = {[label distance=-5pt]315:$v_4$}] (v4) at     ($( 1,-1.5)$) [vertex] {};
						
						\draw[draw = black](u)--(a);
						\draw[draw = black](v)--(b);
						\draw[draw = black](u1)--(a);
						\draw[draw = black](u2)--(a);
						\draw[draw = black](u3)--(a);
						\draw[draw = black](u4)--(a);
						\draw[draw = black](v1)--(b);
						\draw[draw = black](v2)--(b);
						\draw[draw = black](v3)--(b);
						\draw[draw = black](v4)--(b);
						
						\draw[draw = black](v1)--(v2);
						\draw[draw = black](u2)--(u3);
						\draw[draw = black](v3)--(v4);
						
						\draw[draw = black](v1)--(u1) node [midway, ellipse, draw = gray, fill = lightgray, minimum width = 55] {$G_1$};
						\draw[draw = black](v2)--(u2) node [midway, ellipse, draw = gray, fill = lightgray, minimum width = 55] {$G_2$};
						\draw[draw = black](v3)--(u3) node [midway, ellipse, draw = gray, fill = lightgray, minimum width = 55] {$G_3$};
						\draw[draw = black](v4)--(u4) node [midway, ellipse, draw = gray, fill = lightgray, minimum width = 55] {$G_4$};
					\end{tikzpicture}
					\captionof{figure}
					{A graph satisfying $S(t)$ for $k = 4$. $G_1$, $G_2$, $G_3$ and $G_4$ are graphs satisfying $S(t-1)$. Vertices $v_i$ and $u_i$ are the special vertices of degree $1$ in $G_i$.}
					\label{fig:kPowerInduction}
				\end{minipage}
			\end{figure}	
			
			In the first case there exists a colour multiset $M$ such that $M \in D_i$ for all $i \in [k]$. This means that each $G_i$ contains a $u_iv_i$-path $P_i$ with $M(P_i) = M$. Type (iii) and (iv) edges between special vertices of $G_i$ and $G_{i+1}$ mean that the subgraph induced by $P_1 \cup P_2 \cup \ldots \cup P_k$ is a path. This path is a $k$-anagram and so the colouring satisfies $S(t+1)$.
			
			In the second case there is no colour multiset that occurs in every $D_i$. Define the union of colour multisets as $\mathcal{U} := \bigcup_{i \in [k]} D_i$. For a colour multiset $M \in \mathcal{U}$ let $f(M)$ be the number of sets from $\{D_1, \ldots, D_k\}$ that contain $M$. No colour multiset occurs in every $D_i$ so $f(M) \leq k-1$ and therefore
			\begin{align*}
			|\mathcal{U}|(k - 1) \geq \sum_{M \in \mathcal{U}} f(M) = \sum_{i \in [k]} |D_i| \geq k \left(\frac{k}{k-1}\right)^t 
			\end{align*}
			Thus $|\mathcal{U}| \geq \left(\frac{k}{k-1}\right)^{t+1}$. There is a bijection from $\mathcal{U}$ to $D$ because every $uv$-path of length $4t + 4$ shares vertices $a$, $b$, $u$ and $v$. Therefore $|D|=|\mathcal{U}|$ and so $S(t+1)$ is satisfied.
		\end{proof}
		
		Let $c \geq 1$ be a number of colours and let $t$ be sufficiently large so that
		\begin{align*}
		\left(\frac{k}{k-1}\right)^t > (4t + 2)^c.
		\end{align*}	
		Let $G$ be the graph guaranteed to exist by $S(t)$ and fix an arbitrary $c$-colouring of $G$. Let $D$ be the set of colour multisets as defined previously. By (\ref{eqn:colourMult}) there are at most $(4t + 2)^c$ colour multisets of size $4t + 1$. Therefore $|D| \leq (4t + 2)^c < \left(\frac{k}{k-1}\right)^t$. Thus $G$ contains a $k$-anagram by $S(t)$.
	\end{proof}
	The following natural question arises: Does there exist a $d$ such that $\phi_k$ is unbounded on graphs of maximum degree $d$ for all $k \geq 3$? Also, the analogous problem for edge colouring is open. We know of no family of graphs for which $\phi'_k$ is unbounded, except in the case of $k = 2$.
	
	\subsection{Upper bounds on trees}
	In this section we prove an upper bound on $k$-anagram-free colouring that contrasts with the results for anagram-free colouring.
	
	\begingroup
	\def\thetheorem{\ref{thm:kPowerTreeBound}}
	\begin{theorem}
		If $T$ is a tree and $k \geq 4$ then $\phi_k(T) \leq 4$ and $\phi'_k(T) \leq 4$.
	\end{theorem}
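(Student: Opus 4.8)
The plan is to colour both vertices and edges by depth, using a single abelian-square-free (that is, $2$-anagram-free) word on four symbols, whose existence is guaranteed by Ker{\"a}nen \cite{keranen2009abelian}. Root $T$ and let $w = w_0 w_1 w_2 \cdots$ be such a word, taken long enough to exceed the radius of $T$. For the vertex statement I would colour each vertex $v$ with $w_{\operatorname{depth}(v)}$; for the edge statement I would colour each edge with $w_d$, where $d$ is the depth of its lower (deeper) endpoint. The whole point of a depth-based colouring is that the colour sequence along any path is controlled by the unimodal behaviour of depth along that path.

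The key structural observation is that the unique path between two vertices is a ``V'': it strictly decreases in depth from one endpoint down to their least common ancestor $u$ (the \emph{peak}), then strictly increases to the other endpoint. Hence along any subpath $Q$ the colour sequence splits at the peak into two monotone pieces, and each monotone piece is a factor of $w$ or of its reverse. Since $w$ is abelian-square-free and reversal preserves abelian-square-freeness (the reverse of an abelian square is an abelian square), neither monotone piece contains an abelian square, that is, two consecutive equal-multiset blocks.

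Then I would argue as follows. Suppose some path contains a $k$-anagram $W_1 \cdots W_k$ as a factor, and take $Q$ to be exactly this factor, a V-shaped path with peak $u$; for edges the ``peak'' is the cut between the two path-edges incident to the topmost vertex, where the colour $w_{\operatorname{depth}(u)+1}$ is repeated. If $Q$ avoided the peak it would be monotone, and then $W_1 W_2$ would be an abelian square inside a factor of $w$ or its reverse, a contradiction; so $Q$ straddles the peak. Because the path is monotone after the peak, all blocks lying entirely on one side are consecutive, so any two full blocks on the same side would form an abelian square in a monotone piece, which is impossible. Therefore at most one full block lies on each side, and at most one further block may contain the peak, giving $k \leq 3$ and contradicting $k \geq 4$. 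The same counting works verbatim in the edge case, the only new feature being the doubled central colour, which is harmless since a length-two monochromatic factor is merely a $2$-anagram, not a $k$-anagram for $k \geq 4$.

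The main obstacle I expect is making the block-counting fully rigorous at the peak: one must verify that at most one block straddles the peak, that the partial blocks flanking the peak do not secretly produce an extra same-side full block, and, in the edge case, that the repeated central colour is accounted for correctly. Everything else — the unimodality of paths in a rooted tree, the abelian-square-freeness of factors and reverses of $w$, and the reduction from an arbitrary path to the V-shaped factor realising the $k$-anagram — is routine once the depth-colouring is fixed.
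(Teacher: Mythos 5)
Your proposal is correct and takes essentially the same route as the paper: the paper likewise roots $T$, colours by depth using Ker{\"a}nen's abelian-square-free word on four symbols, and uses the unimodality of depth along paths to find, among the $k \geq 4$ blocks of a purported $k$-anagram, two consecutive full blocks on one monotone side of the peak, whose concatenation would be an abelian square inside a factor of the word. Your peak-counting (at most one full block per side plus one straddling block, so $k \leq 3$) is just a rephrasing of the paper's case split (if the peak lies in the $i$-th block with $i \geq 3$ use blocks $1,2$, otherwise use blocks $3,4$), and your explicit handling of reversal and of the repeated edge colour at the peak merely fills in details the paper leaves implicit.
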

	\addtocounter{theorem}{-1}
	\endgroup
	\begin{proof}
		Root $T$ at an arbitrary vertex $r\in V(T)$ and let $h$ be the height of the resulting rooted tree. Let $C = c_o\ldots c_h$ be an anagram-free word on four symbols. Colour each vertex $u \in V(T)$ by $c_x$ where $x$ is the distance between $u$ and $r$.
		
		Let $P=P_1\ldots P_k$ be a path in $T$ such that, for some $n \geq 1$, $|V(P_i)|= n$ for all $i \in [k]$. Note that $P$ is a $k$-anagram if and only if $M(P_1)= M(P_2)= \ldots = M(P_k)$. We now show that $P$ is not a $k$-anagram.
		
		$P$ contains a unique vertex $v$ closest to $r$. If $v \in V(P_i)$ with $i \geq 3$ then the colour sequence along $P_1 \cup P_2$ appears in $C$, so $M(P_1) \neq M(P_2)$. In the other case, $v \in V(P_i)$ with $i \leq 2$. Then the colour sequence along $P_3 \cup P_4$ appears in $C$, so $M(P_3) \neq M(P_4)$. In each case, $P$ is not a $k$-anagram. Hence $\phi_k(T) \leq 4$.
		
		The proof can be repeated for $\phi'_k$ by modifying the definition of $P$. In this case $P$ has length $nk$ and its subpaths $P_1, P_2, \ldots, P_k$ are edge-disjoint such that $P$ is a $k$-anagram if and only if $M(P_1)= M(P_2)= \ldots = M(P_k)$.
	\end{proof}
	Theorem \ref{thm:kPowerTreeBound} demonstrates a qualitative change in behaviour as $k$ increases. The case of $k = 3$ is an open problem that sits between bounded and unbounded behaviour. For $\phi_3$ on trees we have upper bounds due to Equation (\ref{eqn:kChain}) and Theorems \ref{thm:treeHeightBound} and \ref{thm:treePathwidthBound}. We prove similar upper bounds for $\phi'_3$ using Dekking's \cite{dekking1979strongly} result $\phi_3(P) = 3$.
	\begingroup
	\def\thetheorem{\ref{thm:kTreeIndexPathwidth}}
	\begin{theorem}
		For every tree $T$, $\phi'_3(T) \leq 4\pw(T)$.
	\end{theorem}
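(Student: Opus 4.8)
The plan is to mirror the induction on pathwidth used for Theorem~\ref{thm:colourPathwidth}, replacing the vertex colouring of the extracted path $P$ by an edge colouring and, crucially, accounting for the edges that join $P$ to the rest of the tree. I proceed by induction on $\pw(T)$. The base case is $\pw(T)=0$, where $T$ is edgeless and so $\phi'_3(T)=0=4\pw(T)$. For the inductive step, suppose the bound holds for all trees of pathwidth at most $m$, and let $\pw(T)=m+1$. By Lemma~\ref{lem:mainPath} there is a path $P\subseteq T$ with $\pw(T-V(P))\leq m$, so every component of $T-V(P)$ has pathwidth at most $m$ and, by induction, admits a $3$-anagram-free edge colouring with a common palette of $4m$ colours. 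I then introduce four fresh colours: three of them colour the edges of $P$ by a $3$-anagram-free colouring, which exists because edge-colouring a path is the same as vertex-colouring the path that is its line graph, and $\phi_3(P)=3$ by Dekking~\cite{dekking1979strongly}; the fourth colour is assigned to every \emph{connecting edge}, meaning every edge of $T$ with exactly one endpoint on $P$. This uses $4m+4=4(m+1)$ colours in total.

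It remains to show the colouring is $3$-anagram-free, and here lies the only genuinely new ingredient. Let $Q$ be a path in $T$. Since $P$ and $Q$ are paths in a tree, $V(Q)\cap V(P)$ is a (possibly empty) subpath, and $Q$ uses a connecting edge only where it enters or leaves this subpath, so the edges of $Q$ receiving one of the four fresh colours form a single contiguous block $B=c_1\,p_1\cdots p_{s-1}\,c_2$, where $p_1,\dots,p_{s-1}$ are the traversed edges of $P$ and $c_1,c_2$ are the at most two connecting edges at the ends. The fourth colour occurs in $B$ only at the extreme positions $c_1$ and $c_2$, and no fresh colour occurs elsewhere in $Q$. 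Suppose $Q$ contained a $3$-anagram $X=W_1W_2W_3$ with $M(W_1)=M(W_2)=M(W_3)$. Restricting these equal multisets to the fresh colours, the contiguous run of fresh-coloured positions inside $X$ must meet all three equal-length thirds in equal numbers; a short counting argument forces this run to be all of $X$, so $X\subseteq B$. (If instead $X$ used no fresh colour, it would lie inside a single component of $T-V(P)$, which is $3$-anagram-free by induction.)

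Finally I rule out a $3$-anagram inside $B$. Because colour four appears in $B$ only at its two ends, any contiguous window $X\subseteq B$ carries a colour-four edge only possibly at its first and last position; for the three thirds of $X$ to contain equal numbers of such edges they must contain none, so $X$ avoids both $c_1$ and $c_2$ and lies entirely among the $P$-edges $p_1,\dots,p_{s-1}$. But that colouring is $3$-anagram-free by Dekking~\cite{dekking1979strongly}, a contradiction. Hence no $Q$ is a $3$-anagram and $\phi'_3(T)\leq 4(m+1)$, completing the induction. I expect the main obstacle to be exactly the treatment of the connecting edges, which has no analogue in the vertex setting of Theorem~\ref{thm:colourPathwidth}: they must be coloured so as to both block anagrams straddling $P$ and the subtrees and avoid manufacturing new anagrams within $B$, and reserving a single dedicated fourth colour for all of them is what makes the counting close with the tight factor of $4$ rather than $4\pw(T)+1$.
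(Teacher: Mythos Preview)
Your proof is correct and follows essentially the same approach as the paper: induction on pathwidth via Lemma~\ref{lem:mainPath}, with $4m$ colours on the components by induction, three fresh colours on $E(P)$ via Dekking's $\phi_3(P)=3$, and a fourth fresh colour on the connecting edges. Your verification is more detailed than the paper's (which simply asserts that the $P$-part of $Q$ is $3$-anagram-free with colours unique to it), but the construction and the underlying reason it works are identical.
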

	\addtocounter{theorem}{-1}
	\endgroup
	\begin{proof}
		The proof is by induction on $m$. The base case is satisfied because trees of pathwidth $0$ are edgeless. Now assume that every tree $T$ with pathwidth at most $m$ has $\phi'_3(T) \leq 4m$.
		
		Let $T$ be a tree of pathwidth $m+1$. By Lemma \ref{lem:mainPath} there exists a path $P \subseteq T$ such that $\pw(T - V(P)) \leq m$. Each component of $T - V(P)$ has pathwidth at most $m$, so can be $3$-anagram-free edge-coloured with the same set of $4m$ colours, by induction. We now use four additional colours to colour the remaining edges. Dekking \cite{dekking1979strongly} proves $\phi_3(P) = 3$ so we can $3$-anagram-free edge-colour $P$ with three colours. The fourth extra colour is used to colour the edges between $P$ and $T - V(P)$.
	
		We now show that this colouring is $3$-anagram-free. Let $Q$ be a path in $T$. If $Q$ is entirely contained within a component of $T - V(P)$ then, by induction, $Q$ is not a $3$-anagram. Otherwise $Q$ intersects $P$. The intersection of $Q$ and $P$ is a $3$-anagram-free subpath of $P$ and the colours in $Q \cap P$ occur nowhere else in $Q$. Therefore $Q$ is not a $3$-anagram.
	\end{proof}
	Note that no similar bound exists for $\phi'_2$ because stars have pathwidth $1$ and $\phi'_2$ is unbounded on stars. A bound on $\phi'_3(T)$ as a function of radius follows from the relation between pathwidth and radius in trees. Note that since $\phi_3(P) \leq 3$ we are also able to prove that $\phi_3(T) \leq 3\pw(T) + 1$ with a proof similar to the proof of Theorem \ref{thm:colourPathwidth}.
	
	Dekking also proves $\phi_4(P) = 2$ and we use both results to improve upon Theorem \ref{thm:kPowerTreeBound} for larger $k$.  
	\begin{theorem}\label{thm:kPowerTreeBoundVertGeneral}
		For all $z \geq 1$ and $k \geq 2z$, if  $\phi_z(P) \leq y$ for all paths $P$ then $\phi_k(T) \leq y$ for all trees $T$.
	\end{theorem}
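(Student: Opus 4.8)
The plan is to generalise the depth-colouring argument used for Theorem~\ref{thm:kPowerTreeBound}, replacing the four-symbol anagram-free word by a long $z$-anagram-free word on $y$ symbols. First I would observe that the hypothesis $\phi_z(P) \leq y$ for every path supplies, for each length $\ell$, a $z$-anagram-free colouring of the path on $\ell$ vertices using $y$ colours; reading these colours in order gives a $z$-anagram-free word of length $\ell$ on $y$ symbols. Now root $T$ at an arbitrary vertex $r$, let $h$ be the resulting height, take a $z$-anagram-free word $C = c_0 c_1 \ldots c_h$ on $y$ symbols, and colour each vertex $u$ with $c_x$, where $x$ is the depth of $u$. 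This uses at most $y$ colours.

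The core of the argument is to show this colouring is $k$-anagram-free. Let $P = P_1 P_2 \ldots P_k$ be a path in $T$ with $|V(P_i)| = n$ for all $i$, so that $P$ is a $k$-anagram precisely when $M(P_1) = M(P_2) = \cdots = M(P_k)$. The path $P$ has a unique vertex $v$ closest to $r$; reading $P$ from one end to the other, the depth strictly decreases until $v$ and strictly increases after $v$. Say $v \in V(P_i)$. Then $P_1, \ldots, P_{i-1}$ lie on the strictly-ascending portion and $P_{i+1}, \ldots, P_k$ lie on the strictly-descending portion, so each of these two blocks of consecutive parts is a \emph{monotone} subpath whose colour sequence is a factor of $C$ read forwards or backwards.

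The key combinatorial point is that $k \geq 2z$ forces at least one side to carry $z$ consecutive parts: if the ascending side had fewer than $z$ parts and the descending side had fewer than $z$ parts, then $i - 1 < z$ and $k - i < z$, giving $k < 2z$, a contradiction. Without loss of generality suppose $P_1, \ldots, P_z$ all lie on the ascending side (the descending case is symmetric). Their colours spell a factor of $C$ read in reverse. Since reversal preserves the colour multiset of each block, a word is a $z$-anagram if and only if its reverse is; hence if $M(P_1) = \cdots = M(P_z)$ held, the corresponding factor of $C$ would be a $z$-anagram, contradicting the $z$-anagram-freeness of $C$. Therefore $M(P_1), \ldots, M(P_z)$ are not all equal, so $P$ is not a $k$-anagram, and $\phi_k(T) \leq y$.

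The step I expect to require the most care is the counting that guarantees $z$ consecutive parts on a single monotone side, together with the bookkeeping that these parts genuinely form one monotone subpath rather than straddling $v$; once this is set up, the reduction to the $z$-anagram-freeness of $C$ via the reversal observation is routine.
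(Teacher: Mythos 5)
Your proposal is correct and follows essentially the same route as the paper: colour by depth using a $z$-anagram-free word $C$, split the path $P_1\ldots P_k$ at the unique vertex $v$ closest to the root, and use $k \geq 2z$ to force $z$ consecutive parts onto one monotone side, whose colour sequence is then a factor of $C$ (the paper picks $P_1,\ldots,P_z$ when $v \in V(P_i)$ with $i > z$ and $P_{z+1},\ldots,P_{2z}$ otherwise, which is the same pigeonhole you phrase as a counting contradiction). Your explicit treatment of the reversal issue --- that a reversed $z$-anagram is still a $z$-anagram, so it does not matter in which direction the monotone factor is read --- is a detail the paper leaves implicit, and is a welcome addition rather than a deviation.
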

	\begin{proof}
		Let $T$ be a tree with root $r$ and height $h$. Let $C = c_o\ldots c_h$ be a $z$-anagram-free word on $y$ symbols. Colour each vertex $u \in V(T)$ by $c_x$ where $x$ is the distance between $u$ and $r$.
		
		Let $P=P_1\ldots P_k$ be a path in $T$ such that $|V(P_i)|= n$ for some $n \geq 1$. Note that $P$ is a $k$-anagram if and only if $M(P_1)= M(P_2)= \ldots = M(P_k)$. We now show that $P$ is not a $k$-anagram.
		
		$P$ contains a unique vertex $v$ closest to $r$. If $v \in V(P_i)$ with $i > z$ then the colour sequence along $P_1 \cup P_2\cup \ldots \cup P_z$ appears in $C$. In the other case, $v \in V(P_i)$ with $i \leq z$, which implies the colour sequence along $P_{z+1} \cup P_{z+2}\cup \ldots \cup P_{2z}$ appears in $C$. In each case there exist $a,b$ such that $M(P_a) \neq M(P_b)$ because $C$ is $z$-anagram-free. Therefore $P$ is not a $k$-anagram.
	\end{proof}
	Dekking \cite{dekking1979strongly} proves $\phi_3(P) = 3$ and $\phi_4(P) = 2$, so Theorem \ref{thm:kPowerTreeBoundVertGeneral} implies $\phi_6(T) \leq 3$ and $\phi_8(T) \leq 2$. The corresponding results for edge colouring, $\phi'_6(T) \leq 3$ and $\phi'_8(T) \leq 2$, are achieved with modifications similar to those at the end of Theorem \ref{thm:kPowerTreeBound}.
	
	\section{Open Problems}
	Throughout this paper we have posed many conjectures and open problems. In this section we provide a summary as well as some further questions. 
	
	The results of Sections \ref{sec:lower} and \ref{sec:upperTree} motivate further study of $\phi$ on trees. Whether $\phi$ is bounded on the complete binary tree is a particularly interesting question. We also conjecture the result analogous to \cite{currie2002there}, that $\phi(C) \leq 4$ for cycles with only finitely many exceptions. The tight bounds given in Theorems \ref{thm:treeHeightBound} and \ref{thm:treePathwidthBound} motivate further investigation of pathwidth and radius. We ask whether pathwidth is tied to $\phi$ on trees, that is, whether there exists a function$f$ such that $\pw(T) \leq f(\phi(T))$ for every tree $T$. Pathwidth is unbounded on complete binary trees so the two questions are related.
	
	Section \ref{sec:kPower} contains two open problems. The first is whether $\phi'_k$ is bounded by maximum degree for some $k \geq 3$. The second is whether $\phi_3$ and $\phi'_3$ are bounded on trees.
	
	\subsection*{Acknowledgements} Thanks to Gwena\"el Joret for
	stimulating discussions.
	
	\subsection*{Note} At the same time as the present paper was
	completed, Kam{\v{c}}ev, {\L}uczak, and Sudakov \cite{KLS16} posted a paper on the arXiv that independently introduces \emph{anagram-free} graph colouring. Each paper independently proves some of the results in the other paper. Note that Kam{\v{c}}ev, {\L}uczak, and Sudakov proved that $\phi$ is unbounded on complete binary trees, thus solving the above-mentioned open problem.
	
	
	\def\soft#1{\leavevmode\setbox0=\hbox{h}\dimen7=\ht0\advance \dimen7
	  by-1ex\relax\if t#1\relax\rlap{\raise.6\dimen7
	  \hbox{\kern.3ex\char'47}}#1\relax\else\if T#1\relax
	  \rlap{\raise.5\dimen7\hbox{\kern1.3ex\char'47}}#1\relax \else\if
	  d#1\relax\rlap{\raise.5\dimen7\hbox{\kern.9ex \char'47}}#1\relax\else\if
	  D#1\relax\rlap{\raise.5\dimen7 \hbox{\kern1.4ex\char'47}}#1\relax\else\if
	  l#1\relax \rlap{\raise.5\dimen7\hbox{\kern.4ex\char'47}}#1\relax \else\if
	  L#1\relax\rlap{\raise.5\dimen7\hbox{\kern.7ex
	  \char'47}}#1\relax\else\message{accent \string\soft \space #1 not
	  defined!}#1\relax\fi\fi\fi\fi\fi\fi}

\end{document}